\theoremstyle{thmstyleone}%
\newtheorem{theorem}{Theorem}%
\newtheorem{proposition}[theorem]{Proposition}%
\theoremstyle{thmstyletwo}%
\theoremstyle{thmstylethree}%
\newtheorem{ex}{Example}
\newtheorem{lem}{Lemma}
\def\annu#1{_{%
\vbox{\hrule height .2pt 
\kern 1pt 
\hbox{$\scriptstyle {#1}\kern 1pt$}%
}\kern-.05pt 
\vrule width .2pt 
}}
\begin{document}

\title[Balducci]{Several facts about Theodor Wittstein, Gaetano Balducci, and some expressions of the net single premiums under their mortality assumption}

\author*[1]{\fnm{Andrius} \sur{Grigutis}}\email{andrius.grigutis@mif.vu.lt}
\author[1]{\fnm{Eglė} \sur{Matulevičiūtė}}\email{egle.matuleviciute@mif.stud.vu.lt}
\author[1]{\fnm{Mindaugas}\sur{Venckevičius}}\email{mindaugas.venckevicius@mif.stud.vu.lt}

\affil*[1]{\orgdiv{Institute of Mathematics}, \orgname{Vilnius university}, \orgaddress{\street{Naugarduko 24}, \city{Vilnius}, \postcode{LT-03225}, \country{Lithuania}}}

\abstract{The mathematical essence in life insurance spins around the search of the nu\-me\-ri\-cal characteristics of the random variables $T_x$, $\nu^{T_x}$, $T_x\nu^{T_x}$, etc., where $\nu$ (deterministic) denotes the discount multiplier and $T_x$ (random) is the future lifetime of an in\-di\-vi\-dual being of $x\in\{0,\,1,\,\ldots\}$ years old. This work provides some historical facts about T. Wittstein and G. Balducci and their mortality assumption. We also develop some formulas that make it easier to compute the moments of the mentioned random variables assuming that the survival function is interpolated according to Balducci's assumption. Derived formulas are verified using some hypothetical mortality data.
}

\keywords{survival function, future lifetime, Balducci's assumption, net single premium, exponential integral}


\pacs[MSC Classification]{91G05, 62P05, 62N99}

\maketitle

\section{Introduction}\label{sec:intr}

Let $X$ be the absolutely continuous random variable determining a person's lifetime. By $x\in\{0,\,1,\,\ldots\}=:\mathbb{N}_0$ we denote the integer age of a certain person. Let $s(u):=\mathbb{P}(X\geqslant u),\,u\geqslant 0$ denote the survival function. In life insurance, based on some mortality table, the exact values of the survival function in many instances are given over the integers only, i.e. $s(x),\,x\in\mathbb{N}_0$, and the problems that life insurance deals with, ask to compute certain numerical cha\-rac\-te\-ris\-tics of $X$ under the certain interpolation of $s(x)$. More precisely, given the fixed age $x\in\mathbb{N}_0$, we shall connect the value $s(x+k)$ with $s(x+k+1)$ when $k$ varies over $\mathbb{N}_0$, and characterize some random function of $X$. Perhaps, the most common and simple interpolation of $s(x),\,x\in\mathbb{N}_0$ is the assumption of {\bf uniform distribution of deads} (UDD):
\begin{align}\label{UDD}
s(x+k+t)=(1-t)s(x+k)+ts(x+k+1),\,x,\,k\in\mathbb{N}_0,\,t\in[0,\,1].
\end{align}
In other words, the UDD assumption \eqref{UDD} states that the survival function $s(u),\,u\geqslant 0$ is linear over the intervals $[0,\,1)$, $[1,\,2)$, $\ldots$ Notice that $x+k+t$ when $x,\,k\in\mathbb{N}_0$ and $t\in(0,\,1)$ defines the fractional age of a person.

    Another widely known assumption on $s(x),\,x\in\mathbb{N}_0$ interpolation is the so called {\bf Balducci's} assumption:
\begin{align}\label{Balducci}
\frac{1}{s(x+k+t)}=\frac{1-t}{s(x+k)}+\frac{t}{s(x+k+1)},\,x,\,k\in\mathbb{N}_0,\,t\in[0,\,1].
\end{align}
In comparison to the UDD assumption \eqref{UDD}, Balducci's assumption \eqref{Balducci} states that the function $1/s(u),\,u\geqslant 0$ (as long as $s(u)>0$) is linear over the intervals $[0,\,1)$, $[1,\,2)$, $\ldots$, see Figure \ref{f1}. 

\begin{figure}[H]
\centering
\includegraphics[scale=0.75]{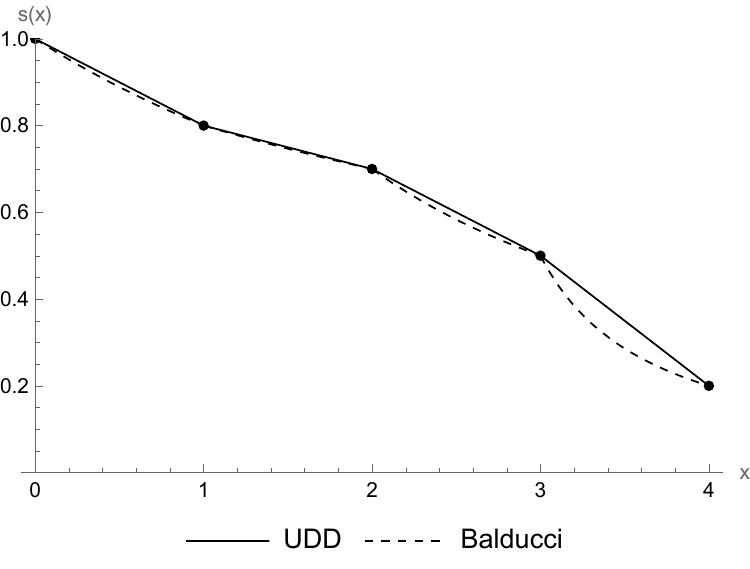}
\caption{View of $s(x)$ interpolation under UDD and Balducci's assumptions according to some hypothetical data.}
\label{f1}
\end{figure}

For $x\in\mathbb{N}_0$, we denote the future lifetime by $T_x$ of a person who is $x$ years old, i.e. $T_x=X-x$ given that $X\geqslant x$, where $X$ is the absolutely continuous random variable that determines a person's lifetime. Let $_{u}p_x$ denote the conditional survival function, i.e., given that $s(x)>0$,
\begin{align*}
_{u}p_x=\mathbb{P}(T_x\geqslant u)=\mathbb{P}(X\geqslant x+u|X\geqslant x)=\frac{s(x+u)}{s(x)},\,x\in\mathbb{N}_0,\,u\geqslant0.
\end{align*}
Then, by $f_x(u)$ we denote the conditional density, under condition $X\geqslant x$, of the random variable $X$ (person's lifetime) and define it as derivative
\begin{align}\label{cond_density}
f_x(u)=-\left( _{u}p_x\right)'_u.
\end{align}

Let us also denote $_{u}q_x:=1-_{u}p_x$, $_{1}p_x:=p_x$, and $_{1}q_x:=q_x$.
Under Balducci's assumption \eqref{Balducci}, the conditional density \eqref{cond_density}
is
\begin{align}\label{cond_dens_Bal}
f_x(k+t)=-(_{k+t}p_x)'_t=-\left(\frac{_{k+1}p_x}{p_{x+k}+t\cdot q_{x+k}}\right)'_t
=\frac{_{k+1}p_x\cdot q_{x+k}}{\left(1-(1-t)\cdot q_{x+k}\right)^2},
\end{align}
where $x,\,k\in\mathbb{N}_0$, and $t\in[0,\,1]$. In comparison, under UDD assumption \eqref{UDD}, the conditional density \eqref{cond_density}
is
\begin{align}\label{cond_dens_UDD}
f_x(k+t)=_{k}p_x-_{k+1}p_x=\frac{d_{x+k}}{l_x}=:_{k|1}q_x,
\end{align}
where $l_x$ represents the expected number of survivors to age $x$ from the $l_0$ newborns, i.e. $l_x=l_0s(x)$, and $d_x=l_x-l_{x+1}$ denote the number of deaths between ages $x$ and $x+1$. 
Let $g(u),\,u\geqslant 0$ be some real-valued function. In life insurance, to characterize the random variable $X$ essentially means to compute the expectation 
\begin{align}\label{expectation}
\mathbb{E}g(X)=\int\limits_{0}^{\infty}g(u)f_x(u)\,du=\int\limits_{0}^{1}g(u)f_x(u)\,du+
\int\limits_{1}^{2}g(u)f_x(u)\,du+\ldots
\end{align}
The computation of $\mathbb{E}g(X)$ in \eqref{expectation} is much easier using the density of UDD as the left-hand-side of \eqref{cond_dens_UDD} does not depend on $t$, and the same job becomes more complicated using the density \eqref{cond_dens_Bal}.

If the function $g$ in \eqref{expectation} represents the random discount multiplier according to the future lifetime, i.e. $g(X)=(1+i)^{-T_x}$, where $i>-1$ denotes the fixed annual interest rate, then the expectations of type \eqref{expectation} are called the {\bf net actuarial values}. As mentioned, the computation of the net actuarial values under the UDD assumption is more simplistic than the same under Balducci's assumption: under UDD we integrate $g$ over the "steps", while under Balducci's assumption, we are tasked to do the same over the ''arcs of hyperbolas'', see Figure \ref{f2}. 
\begin{figure}[H]
\centering
\includegraphics[scale=0.75]{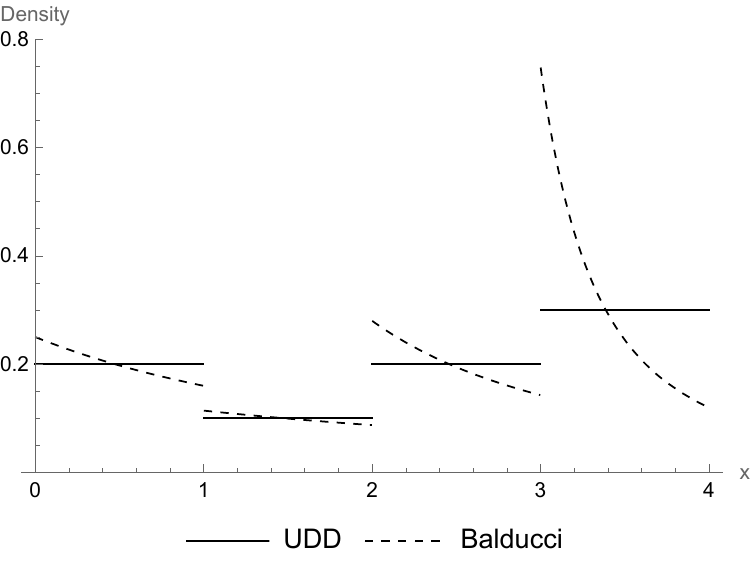}
\caption{View of the conditional density $f_x(k+t)$ under UDD and Balducci's assumptions according to some hypothetical data.}
\label{f2}
\end{figure}

Moreover, if the function $g$ in \eqref{expectation} is differentiable and non-increasing (non-decreasing), then the expected value of $g(T_x)$ under Balducci's mortality assumption \eqref{Balducci} is never less (greater) than $\mathbb{E} g(T_x)$ under the UDD assumption \eqref{UDD}, see Lemma \ref{lem:no_greater}. So, if $g(T_x)=(1+i)^{-T_x}$ then the net single premium under Balducci's assumption is never less than the net single premium under the UDD assumption. In practice, this leads to the higher expected payoff of the threatened claim and consequently the larger insurance price. 

It can be computed, see for instance \cite{batten1978mortality}, \cite{bowers1986actuarial}, that under Balducci's assumption, the force of mortality is
\begin{align}\label{FM_B}
\mu_{x+t}=-\frac{s'_t(x+t)}{s(x+t)}=\frac{q_x}{1-(1-t)\cdot q_x},\,x\in\mathbb{N}_0,\,0<t<1,
\end{align}
while the same under the UDD assumption is
\begin{align}\label{FM_UDD}
\mu_{x+t}=\frac{q_x}{1-t\cdot q_x},\,x\in\mathbb{N}_0,\,0<t<1.
\end{align}

Since the force of mortality \eqref{FM_B} decreases over the year, given that the one in \eqref{FM_UDD} increases, there are many insights and interpretations as to whether Balducci's mortality assumption is realistic for some human populations, see, for instance, \cite[p. 5]{batten1978mortality} cf. \cite[p. 105]{bowers1986actuarial}. The authors believe Balducci's assumption can be realistic for some human populations. For instance, newborns, teenagers who just reached the legal age to drive, consume alcohol, etc., or those individuals who, let's say, periodically face something that increases their death probability.

It is curious about the circumstances of how Balducci's assumption arose. Unfortunately, most publicly available sources, especially the electronic ones in English, lack even the basic facts about the life of Italian actuary Gaetano Balducci (1887 – 1974), see \cite[p. 148]{Balducci_years} where these birth and death years are provided. Balducci did numerous prominent works and officiated in state institutions during his lifetime. For example, it is most famously known that G. Balducci was a State general Accountant for Italy for almost ten years during the middle of the 20th century, see \cite[p. 7]{mosca_artbilancio}. Notably, he was also an active member of the Society of Actuaries (the global professional organization for actuaries), sharing his insights on life insurance theories in several issues of the Journal of Economists and Statistical Review, e.g. \cite{balducci1952}, \cite{balducci1911tavola}, \cite{balducci1917}.
Here we collect and interpret more facts about the assumption \eqref{Balducci} named after this Italian actuary. 

In fact, the interpolation \eqref{Balducci}, as well as UDD and constant mortality force, was first introduced by German mathematician Theodor Ludwig Wittstein (1816 – 1894), see \cite[p. 68]{bowers1986actuarial}. In contrast to Balducci, Wittstein's life appears to be much better documented in easily accessible public sources, see, for example, \cite[p. 358]{Theodor}. In short,  Wittstein was a high school teacher and textbook author, known for his contributions to the mathematical statistics field. In his work (1862) \cite{wittstein1862} Wittstein studied population mortality under certain assumptions and concluded that the mortality probability for the fractional ages can be computed as 
\begin{align*}
_{1-t}q_{x+t}=\frac{q_x \cdot(1-t)}{1-t\cdot q_x},\, x\in\mathbb{N}_0,\,0<t<1,
\end{align*}
what is equivalent to \eqref{Balducci}. In 1917, in work \cite{balducci1917} Balducci studied the development of certain populations too and concluded the same as Wittstein earlier. In our opinion, Balducci was unaware of Wittstein's work due to the different languages and the limited spread of information at the time. See the source \cite{puzey1993} and references therein for more facts and language unification regarding the studies by Wittstein and Balducci.

Let us mention that the formulas of the net actuarial values under UDD are well known, they are derived in many sources, see, for example, \cite{bowers1986actuarial}. Also, the constant mortality force implies the future lifetime $T_x$ (regardless of person's age) being distributed exponentially, i.e. $f_x(t)=\lambda e^{-\lambda t}$, $\lambda>0$, $t\geqslant0$. So, the numerical characteristics of $T_x$ under the constant mortality force are nothing but characteristics of the exponential distribution. In this work, we derive some formulas that make it easier to compute the net actuarial values under Balducci's assumption. We show that the described computation, dependently on the type of insurance and other circumstances, is essentially based on the exponential integral \eqref{exp_int} whose values can be computed by many software. 

The main results of this work are listed in Section \ref{sec:results}. In Propositions \ref{prop:main}-\ref{prop:[T]_nu} we compute the $m$-th moments of the random variables $\nu^{T_x}$, $T_x$, $T_x\cdot \nu^{T_x}$, $[T_x+1]\cdot\nu^{T_x}$ respectively, where $[\cdot]$ denotes the integer part function. The provided moments are computed over the yearly intervals: $l\leqslant T_x<l+1$, $l+1\leqslant T_x<l+2$, $\ldots$, $l+n-1\leqslant T_x<l+n$, where $n\in\mathbb{N}$, $l\in\mathbb{N}_0$, $l$ provides the years of deferment, and $n$ describes the maturity (in years) of insurance. As the random variables $\nu^{T_x}$ and $T_x$ were already introduced, we mention that the random variable $T_x\nu^{T_x}$ describes the present value of the uniformly increasing insurance, while $[T_x+1]\nu^{T_x}$ denotes the present value of yearly increasing insurance when the payoff is immediate after the insurer's death. Based on these random variables, there are various other modifications possible: $[T_x+n]\nu^{T_x}$, $[n-T_x]\nu^{T_x}$, etc.
In the next two statements, Propositions \ref{prop:j_times} and \ref{prop:j_[T]_nu_x}, we divide each year in $j\in\mathbb{N}$ equal pieces and compute the $m$-th moments of the random variables
\begin{align}\label{rvs_5_6}
\nu^{\left([T_x j]+1\right)/j}, \qquad [j T_x+1]\cdot\nu^{T_x},
\end{align}
where the first random variable in \eqref{rvs_5_6} can be used to express some net actuarial value when the installments are paid $j$ times per year, and the second random variable in \eqref{rvs_5_6} describes the present value of the payoff when the insurance amount increases $j$ times per year. Authors anticipate that the studied expectations of the provided random variables cover the most relevant insurance types or can be easily modified in a desired way.  

In Section \ref{sec:examples} we provide some numerical outputs of Propositions \ref{prop:main}-\ref{prop:j_[T]_nu_x} based on hypothetical mortality data. The provided examples were created by the software \cite{Mathematica} and their outputs were double-verified to match the results using the direct sum-integral method as provided in \eqref{expectation}.  

\section{Main results}\label{sec:results}

Let us denote the exponent integral
\begin{align}\label{exp_int}
\mathrm{Ei}(y):=-\int\limits_{-y}^{\infty}\frac{e^{-z}}{z}\,dz=\int\limits_{-\infty}^{y}\frac{e^z}{z}\,dz,\,y\in\mathbb{R}\setminus\{0\}
\end{align}
and recall two more standard notations in actuarial mathematics:
\begin{align*}
\delta:=\log(1+i),\,\nu:=\frac{1}{1+i},\,i> -1,
\end{align*}
where $i$ denotes the annual return rate, see \cite{notations} for more international actuarial notations used in the further text. We start with the statement on the $m$-th moment of the random variable $\nu^{T_x}$.  

\bigskip

\begin{proposition}\label{prop:main}
Say that the survival function $s(x)$, $x\in\mathbb{N}_0$ is interpolated according to Balducci's assumption \eqref{Balducci} and let $T_x$ denote the future lifetime of a person being of $x\in\mathbb{N}_0$ years old. If $p_{x+k}>0$ and $q_{x+k}>0$ for all $l\leqslant k \leqslant l+n-1$, where $l\in\mathbb{N}_0$ and $n\in\mathbb{N}$, then
\begin{align}
^m_{l|}\bar{A}_{x:\actuarialangle{n}}^1&:=\mathbb{E}\nu^{mT_x}\mathbbm{1}_{\{l\leqslant T_x < l+n\}}\nonumber 
=\nu^{ml}\cdot _lp_x-\nu^{m(l+n)}\cdot _{l+n}p_x\\
&+m\delta\sum_{k=l}^{l+n-1}\nu^{m(1+k-1/q_{x+k})}\cdot\frac{{_{k+1}p}_x}{q_{x+k}}\cdot \mathrm{Ei}_{k}(m\delta),\label{for_n_years_l_delay}
\\
^m_{l|}\bar{A}_{x}&:=\lim_{n\to\infty}\, ^m_{l|}\bar{A}_{x:\actuarialangle{n}}^1,
\label{till_the_end_of_life_l}
\end{align}
where $m\in\mathbb{N}$,
\begin{align}\label{eq:exp_dif}
\mathrm{Ei}_{k}(\delta)=\mathrm{Ei}\left(-\frac{\delta\cdot p_{x+k}}{q_{x+k}}\right)
-\mathrm{Ei}\left(-\frac{\delta}{q_{x+k}}\right)
\end{align}
and $\mathrm{Ei}(\cdot)$ denotes the exponent integral \eqref{exp_int}.
\end{proposition}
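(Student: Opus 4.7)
The plan is to compute $\mathbb{E}\nu^{mT_x}\mathbbm{1}_{\{l\leq T_x<l+n\}}$ directly from \eqref{expectation}: split the range into the yearly intervals $[k,k+1)$, insert the Balducci conditional density \eqref{cond_dens_Bal}, and show that each piece evaluates in closed form via the exponential integral. With $\nu=e^{-\delta}$, I would first rewrite the expectation as
\begin{align*}
\sum_{k=l}^{l+n-1}\int_0^1 e^{-m\delta(k+t)}\frac{{_{k+1}p_x}\cdot q_{x+k}}{\bigl(1-(1-t)q_{x+k}\bigr)^2}\,dt.
\end{align*}

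Inside each inner integral I would substitute $y:=1-(1-t)q_{x+k}=p_{x+k}+tq_{x+k}$, which maps $[0,1]$ onto $[p_{x+k},1]$, clears the denominator to $y^2$, and reduces the integral (after pulling out the $e$-prefactors) to $\int_{p_{x+k}}^1 y^{-2}e^{-ay}\,dy$ with $a:=m\delta/q_{x+k}$. Integration by parts with $u=e^{-ay}$, $dv=y^{-2}dy$ then produces boundary terms $-e^{-a}+p_{x+k}^{-1}e^{-ap_{x+k}}$ together with a residual $a\int_{p_{x+k}}^1 y^{-1}e^{-ay}\,dy$; a further substitution $w=-ay$ in the residual, combined with the definition \eqref{exp_int}, identifies it as $-\mathrm{Ei}_k(m\delta)$ in the notation of \eqref{eq:exp_dif}.

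Reinstating the prefactors and simplifying through the identities $a(p_{x+k}-1)=-m\delta$, $k-p_{x+k}/q_{x+k}=1+k-1/q_{x+k}$, and ${_{k+1}p_x}/p_{x+k}={_kp_x}$, the $k$-th summand should collapse to
\begin{align*}
{_kp_x}\nu^{mk}-{_{k+1}p_x}\nu^{m(k+1)}+m\delta\cdot\frac{{_{k+1}p_x}}{q_{x+k}}\nu^{m(1+k-1/q_{x+k})}\mathrm{Ei}_k(m\delta).
\end{align*}
Summing $k=l,\ldots,l+n-1$ then telescopes the first two terms to $\nu^{ml}\cdot{_lp_x}-\nu^{m(l+n)}\cdot{_{l+n}p_x}$, while the third produces exactly the sum in \eqref{for_n_years_l_delay}. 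The limit formula \eqref{till_the_end_of_life_l} follows by letting $n\to\infty$: a realistic mortality table has $_jp_x=0$ once $j$ exceeds the limiting age, so both the boundary term and the tail of the sum eventually vanish.

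The main obstacle I anticipate is sign and substitution bookkeeping, in particular making the direction of the $w=-ay$ change of variables agree with the definition \eqref{exp_int} of $\mathrm{Ei}$ so that $\mathrm{Ei}_k(m\delta)$ emerges with exactly the argument range in \eqref{eq:exp_dif}, and confirming that the exponent $m(1+k-1/q_{x+k})$ lines up precisely after all the $e^{ap_{x+k}}$ factors are absorbed and the quotient $_{k+1}p_x/p_{x+k}$ is collapsed to $_kp_x$.
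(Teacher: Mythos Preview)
Your proposal is correct and follows essentially the same route as the paper: split into yearly pieces, reduce the $k$-th piece via integration by parts to boundary terms plus an integral of $e^{-ay}/y$, recognize the latter as a difference of exponential integrals, and then telescope. The only cosmetic difference is ordering: the paper integrates by parts directly in the $t$-variable (writing $q_{x+k}(1-(k+1-t)q_{x+k})^{-2}\,dt = -d\bigl[(1-(k+1-t)q_{x+k})^{-1}\bigr]$) and afterwards invokes its Lemma~\ref{lem:spec_int} for the residual, whereas you substitute $y=p_{x+k}+tq_{x+k}$ first and carry out both steps in the $y$-variable. Note that your stated residual sign is off (integration by parts gives $-a\int_{p_{x+k}}^{1}y^{-1}e^{-ay}\,dy$, not $+a$), but since that integral equals $-\mathrm{Ei}_k(m\delta)$ the two minus signs cancel and your displayed $k$-th summand and the final telescoped formula are exactly right; this is precisely the bookkeeping you already flagged.
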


\bigskip

{\sc Remark 1:} {\it computing $^m_{l|}\bar{A}_{x}$ in \eqref{till_the_end_of_life_l} we shall let $n\in\mathbb{N}$ run up to such a number that $s(x+n-1+l)>0$ due to $q_{x+k+l}=1-s(x+k+l+1)/s(x+k+l)$ which is valid as long as $s(x+k+l)>0$. In other words, if $p_{x+k}=0$ for some $x,\,k\in\mathbb{N}_0$ in sum \eqref{for_n_years_l_delay}, then these corresponding summands are zeros
due to $_{k+1}p_x=_{k}p_x\cdot p_{x+k}$ and
\begin{align*}
\lim_{p_{x+k}\to 0}\,p_{x+k}\cdot \mathrm{Ei}\left(-\frac{\delta\cdot p_{x+k}}{q_{x+k}}\right)=0.
\end{align*}
Of course, $p_{x+k}=0$ implies that the survival function $s(y)=0$ for all $y\geqslant x+k$. In addition, in \eqref{for_n_years_l_delay} it can be shown that $\nu^{m(l+n)}\cdot _{l+n}p_x\to0$ as $n\to\infty$ if $\mathbb{E}\nu^{mT_x}$ exists.
} 

\bigskip

{\sc Remark 2:} {\it if $q_{x+k}=0$ for some $x,\,k\in\mathbb{N}_0$ in sum \eqref{for_n_years_l_delay}, then these corresponding summands, including the multiplication by $m\delta$, shall be understood as
\begin{align*}
\nu^{m k}\cdot (\nu^m-1)\cdot _{k}p_x
\end{align*}
due to
\begin{align*}
\lim_{\Delta\to 0}\frac{m\delta}{\Delta}\exp\left\{\frac{m\delta}{\Delta}\right\}\left(\mathrm{Ei}\left(-\frac{m\delta}{\Delta}+m\delta\right)-\mathrm{Ei}\left(-\frac{m\delta}{\Delta}\right)\right)=1-\nu^{-m},\,\Delta:=q_{x+k}.
\end{align*}
}

Let us mention that the values of the exponential integral \eqref{exp_int} can be computed according to the formula
$\mathrm{Ei}(-z)=-\mathrm{E}_1(z),\,z>0$, where
\begin{align}\label{approx}
\mathrm{E}_1(z)=-\gamma-\log z-\sum_{k=1}^{\infty}\frac{(-1)^k z^k}{k\cdot k!},
\end{align}
$\gamma=0.577\ldots$ is the Euler (also known as Euler-Mascheroni) constant. See \cite{exp_1}, \cite{exp_2}, and \cite{exp_3} for the various approximations the exponent integral.

The next proposition provides some numerical characteristics of the future lifetime $T_x$ when the survival function over the fractional ages is interpolated as in \eqref{Balducci}. In this case, the sum of hypergeometric functions \cite{Specia_F} can give the general expression of the $m$-th moment. However, this provides little value in this context, and we explicitly write down just a few of the first moments. In all subsequent propositions, $n\to\infty$ is allowed under the same means as in Proposition \ref{prop:main} and description in {\sc Remark 1}.

\bigskip

\begin{proposition}\label{prop:T_x}
Say that the survival function $s(x)$, $x\in\mathbb{N}_0$ is interpolated according to Balducci's assumption \eqref{Balducci} and let $T_x$ denote the future lifetime of a person being of $x\in\mathbb{N}_0$ years old. If $q_{x+k}<1$ for all $l\leqslant k \leqslant l+n-1$, where $l\in\mathbb{N}_0$, $n\in\mathbb{N}$, then
\begin{align}\label{T_x_m}\nonumber
\mathbb{E}\left(T_x\right)^m\mathbbm{1}_{\{l\leqslant T_x< n+l\}}
&=
_lp_x\cdot l^m-_{l+n}p_x\cdot(l+n)^m\\
&+m\sum_{k=l}^{l+n-1}\int_{k}^{k+1}\frac{_{k+1}p_x\cdot t^{m-1}dt}{1-(k+1-t)\cdot q_{x+k}},
\end{align}
where $m\in\mathbb{N}$.

In particular, if, in addition, $q_{x+k}>0$ for all $l\leqslant k \leqslant l+n-1$, and $l_x=l_0\cdot s(x),\,l_0\in\mathbb{N}$, then
\begin{align}
\mathbb{E}\mathbbm{1}_{\{l\leqslant T_x< n+l\}}&=\sum_{k=l}^{l+n-1} {_k}p_x\cdot q_{x+k}=\frac{1}{l_x}\sum_{k=l}^{l+n-1}d_{x+k}=\frac{l_{x+l}-l_{x+l+n}}{l_x},\label{0}\\
{{_{l|}}\mathop{e}\limits^\circ}_{x:\actuarialangle{n}}:=\mathbb{E}T_x\mathbbm{1}_{\{l\leqslant T_x< n+l\}}&=l\cdot {_l}p_x-(n+l)\cdot{_{n+l}p_x}-\sum_{k=l}^{n+l-1}\frac{_{k+1}p_x\cdot\log p_{x+k}}{q_{x+k}},\label{1}\\
\mathbb{E}T_x^2\mathbbm{1}_{\{l\leqslant T_x< n+l\}}&=
l^2\cdot {_l}p_x-(n+l)^2\cdot{_{n+l}p_x}\nonumber\\
&\hspace{-0.5cm}+2\sum_{k=l}^{n+l-1}\frac{_{k+1}p_x\cdot(q_{x+k}+(p_{x+k}-k\cdot q_{x+k})\cdot \log p_{x+k})}{q_{x+k}^2}.
\label{2}
\end{align}
\end{proposition}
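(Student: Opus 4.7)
The plan is to compute $\E T_x^m \mathbbm{1}_{\{l\leq T_x<l+n\}}$ by breaking the range of integration into unit intervals and using the Balducci density \eqref{cond_dens_Bal}. Writing
\begin{align*}
\E T_x^m \mathbbm{1}_{\{l\leq T_x<l+n\}}=\sum_{k=l}^{l+n-1}\int_0^1 (k+t)^m\,\frac{{_{k+1}p_x}\cdot q_{x+k}}{(1-(1-t)q_{x+k})^2}\,dt,
\end{align*}
I would substitute $u=k+t$ on each interval so that the denominator becomes $(1-(k+1-u)q_{x+k})^2$, and then integrate by parts, exploiting the elementary antiderivative
\begin{align*}
\frac{d}{du}\Bigl[\frac{1}{1-(k+1-u)q_{x+k}}\Bigr]=\frac{q_{x+k}}{(1-(k+1-u)q_{x+k})^2}.
\end{align*}

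Integration by parts with $v=u^m$ produces a boundary contribution and a surviving integral of $u^{m-1}$. Evaluating the boundary at $u=k+1$ gives $-(k+1)^m$ (the denominator equals $1$), while at $u=k$ it gives $-k^m/p_{x+k}$. After multiplying by $_{k+1}p_x$ and using $_{k+1}p_x={_k p_x}\cdot p_{x+k}$, the endpoint terms become $k^m\cdot{_k p_x}-(k+1)^m\cdot{_{k+1}p_x}$, which telescopes across $k=l,\ldots,l+n-1$ to $l^m\cdot{_l p_x}-(l+n)^m\cdot{_{l+n}p_x}$. The leftover piece is precisely $m\sum_{k=l}^{l+n-1}\int_k^{k+1}{_{k+1}p_x}\cdot t^{m-1}/(1-(k+1-t)q_{x+k})\,dt$, establishing \eqref{T_x_m}.

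For the three special cases I would work directly. For \eqref{0}, the change of variables $u=1-(1-t)q_{x+k}$ turns the density integral into $\int_{p_{x+k}}^1 u^{-2}\,du=q_{x+k}/p_{x+k}$, so each summand is $_k p_x\cdot q_{x+k}$; the identity $l_{x+k}q_{x+k}=d_{x+k}$ and the telescoping $\sum d_{x+k}=l_{x+l}-l_{x+l+n}$ yield the remaining equalities. For \eqref{1}, setting $m=1$ in \eqref{T_x_m} and evaluating $\int_k^{k+1}dt/(1-(k+1-t)q_{x+k})$ by the same substitution gives $-\log p_{x+k}/q_{x+k}$. For \eqref{2}, plugging $m=2$ leaves $\int_k^{k+1}t\,dt/(1-(k+1-t)q_{x+k})$, which via $u=1-(k+1-t)q_{x+k}$ (so $t=k+1-(1-u)/q_{x+k}$) splits into a $\log p_{x+k}$-term and a $q_{x+k}^{-1}$-term; combining and using $1-(k+1)q_{x+k}=p_{x+k}-k\cdot q_{x+k}$ produces the stated expression.

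The only real obstacle is bookkeeping: verifying that the integration-by-parts boundary terms telescope correctly under the identity $_{k+1}p_x={_k p_x}\cdot p_{x+k}$, and carrying out the algebraic simplification for the $m=2$ integral so that $1-(k+1)q_{x+k}$ collapses into $p_{x+k}-k\cdot q_{x+k}$. Everything else is a mechanical computation given \eqref{cond_dens_Bal}.
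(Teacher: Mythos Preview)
Your approach is essentially identical to the paper's: split the expectation into unit intervals, integrate by parts against the Balducci density to produce telescoping boundary terms plus the residual $t^{m-1}$ integral, and then specialize to $m=1,2$ with elementary antiderivatives. One bookkeeping slip: the displayed derivative should carry a minus sign, i.e.\ $\dfrac{d}{du}\Bigl[\dfrac{1}{1-(k+1-u)q_{x+k}}\Bigr]=-\dfrac{q_{x+k}}{(1-(k+1-u)q_{x+k})^2}$, which is consistent with the boundary values you actually report and with the paper's computation.
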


{\sc Remark 3:} {\it Formula \eqref{0} does not depend on the interpolation of $s(x)$ over fractional age. The expression \eqref{1} in a different form with $l=0$ and $n\to\infty$ is also given in \cite[p. 24]{bravo2007tabuas}. It can be shown that $_{l+n}p_x\cdot(l+n)^m\to 0$ if $n\to\infty$ and $\mathbb{E}(T_x)^m$, $m\in\mathbb{N}$ exists. The summands in \eqref{1} and \eqref{2} \textup{(also \eqref{T_x_m})} are zeros if $p_{x+k}=0$ for some $x,\,k\in\mathbb{N}_0$, and 
\begin{align*}
\frac{\log p_{x+k}}{q_{x+k}}\to-1,\quad
\frac{q_{x+k}+(p_{x+k}-k\cdot q_{x+k})\cdot \log p_{x+k}}{q_{x+k}^2}\to\frac{1}{2}+k
\end{align*}
as $q_{x+k}\to0$ for some $x,\,k\in\mathbb{N}_0$. See {\sc Remark 1} under Proposition \ref{prop:main} also.
} 

\bigskip

    In the next Proposition, we compute the $m$-th moment of the random variable $T_x\cdot \nu^{T_x}$, which describes the present value of uniformly increasing payoff. Again, the general expression is complicated and we write down explicitly just the first two moments. 

\bigskip

\begin{proposition}\label{increasing}
Say that the survival function $s(x)$, $x\in\mathbb{N}_0$ is interpolated according to Balducci's assumption \eqref{Balducci} and let $T_x$ denote the future lifetime of a person who is $x\in\mathbb{N}_0$ years old. If $q_{x+k}<1$ for all $l\leqslant k \leqslant l+n-1$, where $l\in\mathbb{N}_0$, $n\in\mathbb{N}$, then
\begin{align}\nonumber
_{l|}^m\left(\bar{I}\bar{A}\right)_{x:\actuarialangle{n}}^1:=\mathbb{E}\,(T_x\nu^{T_x})^m\mathbbm{1}_{\{l\leqslant T_x< n+l\}}
&=l^m\cdot\nu^{ml}\cdot _lp_x-(l+n)^m\cdot \nu^{m(l+n)}\cdot _{l+n}p_x\\
&\hspace{-1cm}+m\sum_{k=l}^{l+n-1} {_{k+1}}p_x \int_{k}^{k+1}\frac{t^{m-1}\cdot \nu^{mt}\cdot(1-\delta t)}{1-(k+1-t)q_{x+k}}\,dt,\label{increasing_gen}
\end{align}
where $m\in\mathbb{N}$.

    In particular, if, in addition, $p_{x+k}>0$ and $q_{x+k}>0$ for all $l\leqslant k \leqslant l+n-1$, then
\begin{align}\nonumber
_{l|}\left(\bar{I}\bar{A}\right)_{x:\actuarialangle{n}}^1&=\mathbb{E}\,T_x\nu^{T_x}\mathbbm{1}_{\{l\leqslant T_x< n+l\}}\\\nonumber
&=_lp_x\cdot l \cdot \nu^l-_{l+n}p_x\cdot (l+n) \cdot \nu^{l+n}-i\sum_{k=l}^{l+n-1}\frac{_{k+1}p_x}{q_{x+k}}\cdot \nu^{k+1}\\
&-\sum_{k=l}^{l+n-1}\left(1-\delta\cdot\left(1+k-\frac{1}{q_{x+k}}\right)\right)\cdot
\frac{_{k+1}p_x}{q_{x+k}}\cdot \nu^{1+k-\frac{1}{q_{x+k}}}\cdot \mathrm{Ei}_k(\delta),\label{increasing_1}
\end{align}
where
\begin{align*}
\mathrm{Ei}_{k}(\delta)=\mathrm{Ei}\left(-\frac{\delta\cdot p_{x+k}}{q_{x+k}}\right)
-\mathrm{Ei}\left(-\frac{\delta}{q_{x+k}}\right)
\end{align*}
and $\mathrm{Ei}(\cdot)$ denotes the exponent integral \eqref{exp_int}.

Moreover, under the same assumptions,
\begin{align}\nonumber
^2_{l|}\left(\bar{I}\bar{A}\right)_{x:\actuarialangle{n}}^1&=\mathbb{E}\left(T_x\nu^{T_x}\right)^2\mathbbm{1}_{\{l\leqslant T_x< n+l\}}=\\
&l^2\cdot \nu^{2l}\cdot _lp_x-(l+n)^2\cdot\nu^{2(l+n)}\cdot _{l+n}p_x+
\sum_{k=l}^{n+l-1}\, _{k+1}p_x\cdot \left(I_{1,\,k}+I_{2,\,k}\right),\label{increasing_2}
\end{align}
where $I_{1,\,k}$ and $I_{2,\,k}$ are given in \eqref{I_1} and \eqref{I_2} respectively.
\end{proposition}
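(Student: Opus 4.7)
The strategy mirrors the proof of Proposition~\ref{prop:main}: integrate by parts, split into yearly intervals, and evaluate each piece using Balducci's closed form for ${}_{t}p_{x}$ together with a substitution that identifies the result via the exponential integral \eqref{exp_int}.

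For the general formula \eqref{increasing_gen}, I would start from
\[
\mathbb{E}(T_x\nu^{T_x})^m\mathbbm{1}_{\{l\leqslant T_x<n+l\}}=\int_{l}^{l+n} t^m\nu^{mt}\,f_x(t)\,dt
\]
and integrate by parts with $dv=f_x(t)\,dt=-d({}_{t}p_{x})$. The boundary contributions produce exactly $l^m\nu^{ml}\cdot{}_{l}p_x-(l+n)^m\nu^{m(l+n)}\cdot{}_{l+n}p_x$, while the identity $(t^m\nu^{mt})'=m\,t^{m-1}\nu^{mt}(1-\delta t)$ (from $\log\nu=-\delta$) yields the remaining integral. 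Splitting over unit intervals $[k,k+1]$ and substituting the Balducci form
\[
{}_{t}p_{x}=\frac{{}_{k+1}p_{x}}{1-(k+1-t)q_{x+k}},\qquad t\in[k,k+1],
\]
which follows directly from \eqref{Balducci} (cf.\ the derivation of \eqref{cond_dens_Bal}), then gives \eqref{increasing_gen}.

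For the case $m=1$, the task reduces to evaluating, for each $k$,
\[
J_k:=\int_{k}^{k+1}\frac{\nu^u(1-\delta u)}{1-(k+1-u)q_{x+k}}\,du.
\]
I would use the algebraic identity
\[
\frac{u}{1-(k+1-u)q_{x+k}}=\frac{1}{q_{x+k}}+\frac{k+1-1/q_{x+k}}{1-(k+1-u)q_{x+k}}
\]
to reduce $J_k$ to a combination of $\int_{k}^{k+1}\nu^u\,du=i\nu^{k+1}/\delta$ and $\int_{k}^{k+1}\nu^u/[1-(k+1-u)q_{x+k}]\,du$. For the latter integral the change of variables $y=u-(k+1)+1/q_{x+k}$ turns the denominator into $q_{x+k}\,y$ and rewrites the integrand as $\nu^{k+1-1/q_{x+k}}e^{-\delta y}/(q_{x+k}\,y)$; a further rescaling $z=\delta y$ identifies the integral on $[p_{x+k}/q_{x+k},\,1/q_{x+k}]$ as $-\mathrm{Ei}_k(\delta)/q_{x+k}$ by \eqref{exp_int}. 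Assembling the pieces with $1-\nu=i\nu$ yields \eqref{increasing_1}.

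The second-moment formula \eqref{increasing_2} proceeds on the same template, with $\delta$ replaced by $2\delta$ throughout the exponential-integral substitution. Iterating the above splitting on $u^2/[1-(k+1-u)q_{x+k}]$ reduces the problem to closed-form integrals of $\nu^{2u}$, $u\nu^{2u}$, and $\nu^{2u}/[1-(k+1-u)q_{x+k}]$, each already at hand. The principal obstacle is purely bookkeeping: collecting the resulting constants and exponents into the compact expressions $I_{1,k}$ and $I_{2,k}$ from \eqref{I_1}--\eqref{I_2} requires careful tracking of the factors $\nu^{k+1}$ versus $\nu^{1+k-1/q_{x+k}}$ and of the sign convention inherited from \eqref{exp_int} for $\mathrm{Ei}_k(\cdot)$.
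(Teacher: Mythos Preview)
Your proposal is correct and follows essentially the same approach as the paper: integrate by parts to trade the squared Balducci denominator for a linear one, then use the substitution that identifies the remaining integral with $\mathrm{Ei}_k(\cdot)$. The only cosmetic difference is that you integrate by parts globally on $[l,l+n]$ against $-d({}_tp_x)$, obtaining the telescoped boundary terms and the general formula \eqref{increasing_gen} in one stroke, whereas the paper carries out the integration by parts on each unit interval separately (producing terms $k\nu^k/p_{x+k}-(k+1)\nu^{k+1}$) and then telescopes; the computational content is identical.
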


\bigskip

{\sc Remark 4: }{\it as previously, the summands in \eqref{increasing_1} and \eqref{increasing_2} are zeros if $p_{x+k}=0$ for some $x,\,k\in\mathbb{N}_0$. If $q_{x+k}=0$ for some $x,\,k\in\mathbb{N}_0$ in the corresponding terms of two sums of \eqref{increasing_1}, then the limit, as $q_{x+k}\to0$, is $\nu^{k+1}\cdot{_k}p_x\cdot(1-i\cdot k)$. 
If $q_{x+k}=0$ for some $x,\,k\in\mathbb{N}_0$ in some summands in \eqref{increasing_2}, then the limit, as $q_{x+k}\to0$, is $\nu^{2k+2}\cdot {_k}p_x\cdot\left(1+2k-k^2\cdot(i^2+2i)\right)$.
}

\bigskip

In the following proposition, we compute the $m$-th moment of the random variable $[T_x+1]\cdot\nu^{T_x}$ which describes the present value of the yearly increasing payoff.

\bigskip

\begin{proposition}\label{prop:[T]_nu}
Say that the survival function $s(x)$, $x\in\mathbb{N}_0$ is interpolated according to Balducci's assumption \eqref{Balducci} and let $T_x$ denote the future lifetime of a person being of $x\in\mathbb{N}_0$ years old. If $q_{x+k}>0$ and $p_{x+k}>0$ for all $l\leqslant k \leqslant l+n-1$, where $l\in\mathbb{N}_0$ and $n\in\mathbb{N}$, then
\begin{align}\label{eq:in_prop_4}\nonumber
&^m_{l|}\left(I\bar{A}\right)_{x:\actuarialangle{n}}^1:=
\mathbb{E}\,([T_x+1]\nu^{T_x})^m\mathbbm{1}_{\{l\leqslant T_x< n+l\}}\\\nonumber
&=\sum_{k=l}^{l+n-1} {_k}p_x\cdot (k+1)^m \cdot \nu^{mk}\cdot(1-p_{x+k}\cdot \nu^m)\\
&+\delta m \sum_{k=l}^{l+n-1} {_{k+1}}p_x\cdot (k+1)^m\cdot \frac{v^{m(1+k-1/q_{x+k})}}{q_{x+k}}\cdot \mathrm{Ei}_k(m\delta),
\end{align}
where $m\in\mathbb{N}$,
\begin{align*}
\mathrm{Ei}_{k}(\delta)=\mathrm{Ei}\left(-\frac{\delta\cdot p_{x+k}}{q_{x+k}}\right)
-\mathrm{Ei}\left(-\frac{\delta}{q_{x+k}}\right)
\end{align*}
and $\mathrm{Ei}(\cdot)$ denotes the exponent integral \eqref{exp_int}.
\end{proposition}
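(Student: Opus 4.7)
The plan is to recognize this proposition as an almost immediate corollary of Proposition~\ref{prop:main}, obtained by conditioning on which integer year the death occurs in. First I would split the event $\{l\leqslant T_x<l+n\}$ into the disjoint union of the yearly pieces $\{k\leqslant T_x<k+1\}$ for $k=l,l+1,\ldots,l+n-1$, writing
\begin{align*}
\mathbb{E}\,([T_x+1]\nu^{T_x})^m\mathbbm{1}_{\{l\leqslant T_x<n+l\}}
=\sum_{k=l}^{l+n-1}\mathbb{E}\,([T_x+1]\nu^{T_x})^m\mathbbm{1}_{\{k\leqslant T_x<k+1\}}.
\end{align*}
On each yearly piece the integer part is deterministic: if $k\leqslant T_x<k+1$ then $k+1\leqslant T_x+1<k+2$, whence $[T_x+1]=k+1$ (the boundary $T_x=k+1$ is a null set under the Balducci density \eqref{cond_dens_Bal}). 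Hence the $(k+1)^m$ factor pulls out of the expectation, leaving $(k+1)^m\,\mathbb{E}\nu^{mT_x}\mathbbm{1}_{\{k\leqslant T_x<k+1\}}$.

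Next I would invoke Proposition~\ref{prop:main} applied to the single year starting at $k$, i.e. with $l$ replaced by $k$ and $n=1$, which gives
\begin{align*}
\mathbb{E}\nu^{mT_x}\mathbbm{1}_{\{k\leqslant T_x<k+1\}}
={_{k}p_x}\cdot\nu^{mk}-{_{k+1}p_x}\cdot\nu^{m(k+1)}
+m\delta\cdot\frac{{_{k+1}p_x}}{q_{x+k}}\,\nu^{m(1+k-1/q_{x+k})}\,\mathrm{Ei}_{k}(m\delta).
\end{align*}
Using the multiplicative identity ${_{k+1}p_x}={_{k}p_x}\cdot p_{x+k}$, the first two terms combine into ${_{k}p_x}\nu^{mk}(1-p_{x+k}\nu^{m})$, which matches the first sum of \eqref{eq:in_prop_4} after multiplication by $(k+1)^m$ and summation over $k$. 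The exponential-integral term, multiplied by $(k+1)^m$ and summed, reproduces the second sum of \eqref{eq:in_prop_4} verbatim.

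The only genuine step to justify is the constancy of $[T_x+1]$ on each integer subinterval, and this is immediate from the definition of the integer-part function together with the absolute continuity of $T_x$ under assumption \eqref{Balducci}; the rest is bookkeeping. I do not expect any real obstacle: the whole statement is essentially a weighted yearly decomposition of Proposition~\ref{prop:main}, and the degenerate cases $p_{x+k}=0$ or $q_{x+k}=0$ are already handled by the remarks following Proposition~\ref{prop:main} (which is why the hypotheses $p_{x+k}>0$ and $q_{x+k}>0$ are imposed here).
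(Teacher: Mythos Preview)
Your proposal is correct and follows essentially the same approach as the paper: both decompose into yearly pieces, use that $[T_x+1]=k+1$ on $\{k\leqslant T_x<k+1\}$ to pull out $(k+1)^m$, and then reduce to the single-year moment of $\nu^{mT_x}$. The only cosmetic difference is that the paper redoes the integral computation inline (``arguing the same as before''), whereas you invoke Proposition~\ref{prop:main} with $n=1$ directly; your version is marginally cleaner for that reason.
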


\bigskip

{\sc Remark 5: }{\it again, if $p_{x+k}=0$ for some $x,\,k\in\mathbb{N}_0$ in some summands in \eqref{eq:in_prop_4}, then they are zeros. If $q_{x+k}=0$ for some $x,\,k\in\mathbb{N}_0$ in some summands in \eqref{eq:in_prop_4}, then the limits in these corresponding summands, as $q_{x+k} \to 0$, are ${_k}p_x\cdot k^m \cdot \nu^{mk}\cdot (\nu^m-1)$, see also {\sc Remark 2} above.
}

\bigskip

    In this work, we do not develop any formulas for the present actuarial 
values as they admit the representations via the net single premiums computed in Proposition \ref{prop:main}, see \cite[Ch. 5]{bowers1986actuarial}. On the other hand, when the insurer pays a certain amount with a different intensity than yearly (as long as he/she is alive), we shall look for some more convenient formulas that convert the present value of such cash flows to the present values of yearly mortality data-based cash flows.    

    Let us consider the time slot from $0$ up to $n\in\mathbb{N}$ years and split every single year in $j\in\mathbb{N}$ intervals as follows:
\begin{align}\label{intervals}
\hspace{-0.5cm}
\begin{cases}
\text{1'th year:}\left[0,\,\frac{1}{j}\right),\,\left[\frac{1}{j},\,\frac{2}{j}\right),\,\ldots,\,
\left[1-\frac{1}{j},\,1\right),\\
\text{2'nd year:}\left[1,\,1+\frac{1}{j}\right),\,\left[1+\frac{1}{j},\,1+\frac{2}{j}\right),\,\ldots,\,
\left[2-\frac{1}{j},\,2\right),\\
\vdots\\
\text{n'th year:}\left[n-1,\,n-1+\frac{1}{j}\right),\,\left[n-1+\frac{1}{j},\,n-1+\frac{2}{j}\right),\,\ldots,\,
\left[n-\frac{1}{j},\,n\right).
\end{cases}
\end{align}

In the next proposition, we provide the formula to compute the $m$-th moment of the random variable $\nu^{([T_x j]+1)/j}$, $j\in\mathbb{N}$, where the future lifetime $T_x$ is distributed over the intervals in \eqref{intervals}. In this situation, insurance deferment can be described by ''$l*n_1$'', where $l\in\mathbb{N}_0$ provides years, and $n_1\in\{0,\,1,\,\ldots,\,j-1\}$ means the number of periods whose length is $1/j$. For instance, if $j=12$, then $l=1$ and $n_1=2$ describe the deferment of a year and two months.   

\bigskip

\begin{proposition}\label{prop:j_times}
Say that the survival function $s(x)$, $x\in\mathbb{N}_0$ is interpolated according to Balducci's assumption \eqref{Balducci} and let $T_x$ denote the future lifetime of a person being of $x\in\mathbb{N}_0$ years old. If $j,\,n\in\mathbb{N}$, $m,\,l\in\mathbb{N}_0$, and $n_1\in\{0,\,1,\,\ldots,\,j-1\}$, then
\begin{align}\label{prop5_eq1}\nonumber
&^m_{l|}\left(\bar{A}^{(j)}\right)_{x:\actuarialangle{n}}^1:=
\mathbb{E}\left(\nu^{\frac{[T_x\,j]+1}{j}}\right)^m\mathbbm{1}_{\{l*n_1\leqslant T_x<n+l*n_1\}}\\\nonumber
&=\frac{1}{j}\sum_{k=l}^{n+l-1}\sum_{d=n_1}^{j-1}\nu^{(k+(d+1)/j)m}\frac{_{k+1}p_x\cdot q_{x+k}}{\left(p_{x+k}+\frac{d}{j}\cdot q_{x+k}\right)\left(p_{x+k}+\frac{d+1}{j}\cdot q_{x+k}\right)}+\\
&\frac{\nu^{(n+l)m}}{j}{_{n+l+1}}p_x\cdot q_{x+n+l} \sum_{d=0}^{n_1-1}
\frac{\nu^{(d+1)/j\cdot m}}{\left(p_{x+n+l}+\frac{d}{j}\cdot q_{x+n+l}\right)
\left(p_{x+n+l}+\frac{d+1}{j}\cdot q_{x+n+l}\right)
}
.
\end{align}
\end{proposition}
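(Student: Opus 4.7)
The plan is to decompose the expectation into contributions from the sub-intervals of length $1/j$ listed in \eqref{intervals}, and on each such sub-interval reduce everything to a single explicit probability via the Balducci form of ${_{k+t}p_x}$.

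First I would observe that on the sub-interval $[k+d/j,\,k+(d+1)/j)$ the random variable $([T_x j]+1)/j$ is constant and equal to $k+(d+1)/j$, hence
\begin{align*}
\mathbb{E}\,\nu^{m([T_x j]+1)/j}\,\mathbbm{1}_{\{T_x\in[k+d/j,\,k+(d+1)/j)\}}
=\nu^{m(k+(d+1)/j)}\bigl({_{k+d/j}p_x}-{_{k+(d+1)/j}p_x}\bigr).
\end{align*}
Balducci's assumption \eqref{Balducci} rewrites as ${_{k+t}p_x}={_{k+1}p_x}/(p_{x+k}+t\,q_{x+k})$, and a routine difference-of-fractions calculation then gives
\begin{align*}
{_{k+d/j}p_x}-{_{k+(d+1)/j}p_x}
=\frac{{_{k+1}p_x}\cdot q_{x+k}/j}{\bigl(p_{x+k}+\tfrac{d}{j}q_{x+k}\bigr)\bigl(p_{x+k}+\tfrac{d+1}{j}q_{x+k}\bigr)}.
\end{align*}
This is precisely the generic summand in \eqref{prop5_eq1}, up to the $1/j$ global prefactor.

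It then remains to assemble the event $\{l*n_1\leq T_x<n+l*n_1\}$ from the appropriate sub-intervals. I would group the covering pairs $(k,d)$ into (a) $k\in\{l,l+1,\ldots,l+n-1\}$ with $d\in\{n_1,n_1+1,\ldots,j-1\}$, producing the double sum of \eqref{prop5_eq1}; and (b) $k=n+l$ with $d\in\{0,1,\ldots,n_1-1\}$, producing the trailing single sum, in which the $d$-independent factors $\nu^{(n+l)m}$, ${_{n+l+1}p_x}$ and $q_{x+n+l}$ are pulled out in front.

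The probabilistic and algebraic manipulations are routine once the Balducci form is in hand; the only real obstacle is the bookkeeping of the index ranges, so that the sub-interval contributions split precisely into the (a)/(b) structure of \eqref{prop5_eq1} and cover the full deferment-plus-maturity window without overlap.
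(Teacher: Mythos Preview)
Your proposal is correct and takes essentially the same approach as the paper: decompose the event into the sub-intervals of length $1/j$ on which $([T_xj]+1)/j$ is constant, evaluate the probability of each sub-interval via Balducci, and group the indices exactly as in parts (a) and (b). The only cosmetic difference is that the paper isolates your difference-of-fractions identity for ${_{k+d/j}p_x}-{_{k+(d+1)/j}p_x}$ as a separate lemma (Lemma~\ref{lem:j_times}) and then cites it, whereas you derive it inline.
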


In the last proposition, we provide the formula to compute the $m$-th moment of the random variable $[j\cdot T_x+1]\cdot\nu^{T_x}$, $j\in\mathbb{N}$, where the future lifetime $T_x$ again is distributed over the intervals in \eqref{intervals}.

\bigskip

\begin{proposition}\label{prop:j_[T]_nu_x}
Say that the survival function $s(x)$, $x\in\mathbb{N}_0$ is interpolated according to Balducci's assumption \eqref{Balducci} and let $T_x$ denote the future lifetime of a person being of $x\in\mathbb{N}_0$ years old. If $j,\,n\in\mathbb{N}$, $m,\,l\in\mathbb{N}_0$, $n_1\in\{0,\,1,\,\ldots,\,j-1\}$, $p_{x+k}>0$, and $q_{x+k}>0$ for all $l\leqslant k \leqslant n+l-1$, then
\begin{align*}
&^m_{l|}\left(I^{(j)}\bar{A}\right)_{x:\actuarialangle{n}}^1:=
\mathbb{E}\left([j\cdot T_x+1]\cdot \nu^{T_x}\right)^m\mathbbm{1}_{\{l*n_1\leqslant T_x < n+l*n_1\}}=\\
&
\sum_{k=l}^{n+l-1}\sum_{d=n_1}^{j-1}(d+jk+1)^m\cdot {_{k+1}}p_x\cdot\nu^{m\left(k+\frac{d}{j}\right)}\left(\frac{1}{1-\left(1-\frac{d}{j}\right)\cdot q_{x+k}}-\frac{\nu^{m/j}}{1-\left(1-\frac{d+1}{j}\right)\cdot q_{x+k}}\right)\\
&+{_{n+l+1}}p_x\cdot \nu^{m(n+l)}\times\\
&\sum_{d=0}^{n_1-1}(d+j\cdot(n+l)+1)^m\cdot\nu^{\frac{md}{j}}
\left(\frac{1}{1-\left(1-\frac{d}{j}\right)\cdot q_{x+n+l}}-\frac{\nu^{m/j}}{1-\left(1-\frac{d+1}{j}\right)\cdot q_{x+n+l}}\right)
\\
&-\delta m\sum_{k=l}^{n+l-1}\sum_{d=n_1}^{j-1}(d+j\cdot k+1)^m\cdot {_{k+1}p_x}\cdot \frac{\nu^{m(1+k-1/q_{x+k})}}{q_{x+k}}\cdot \mathrm{Ei}_k(m\delta,\,d,\,j)\\
&-
\delta m \cdot {_{n+l+1}}p_x\cdot \frac{\nu^{m(1+n+l-1/q_{x+k})}}{q_{x+n+l}}\sum_{d=n_1}^{j-1}(d+j\cdot (n+l)+1)^m\cdot \mathrm{Ei}_{n+l}(m\delta,\,d,\,j)
.
\end{align*}
Here
\begin{align*}
\mathrm{Ei}_{k}(\delta,\,d,\,j)=\mathrm{Ei}\left(-\delta\left(\frac{p_{x+k}}{q_{x+k}}+\frac{d+1}{j}\right)\right)
-
\mathrm{Ei}\left(-\delta\left(\frac{p_{x+k}}{q_{x+k}}+\frac{d}{j}\right)\right)
\end{align*}
and $\mathrm{Ei}(\cdot)$ denotes the exponent integral \eqref{exp_int}.
\end{proposition}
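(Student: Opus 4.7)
The central observation is that $[jT_x+1]$ is piecewise constant on the refined partition \eqref{intervals}, equal to $jk+d+1$ on $[k+d/j,\,k+(d+1)/j)$. My plan is therefore to decompose $\mathbb{E}([jT_x+1]\nu^{T_x})^m \mathbbm{1}_{\{l*n_1\le T_x<n+l*n_1\}}$ as a sum over the subintervals of \eqref{intervals} covered by the indicator, namely the main family indexed by $(k,d)$ with $l \le k \le n+l-1$ and $n_1 \le d \le j-1$, together with the terminal family at $k=n+l$ with $0 \le d \le n_1-1$. After this split, the task reduces to computing, on each such subinterval, the integral of $\nu^{mt}$ against the Balducci conditional density \eqref{cond_dens_Bal}.

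On a single subinterval, the change of variable $t=k+s$, $s\in[d/j,(d+1)/j)$, gives ${f_x(k+s)=\,{_{k+1}p_x}\,q_{x+k}/(1-(1-s)q_{x+k})^2}$, so the contribution is
\begin{align*}
(d+jk+1)^m\cdot {_{k+1}p_x}\cdot q_{x+k}\cdot \nu^{mk}\int_{d/j}^{(d+1)/j}\frac{\nu^{ms}}{(1-(1-s)q_{x+k})^2}\,ds.
\end{align*}
I would compute this by integration by parts, using the primitive $-1/[q_{x+k}(1-(1-s)q_{x+k})]$ for the squared-denominator factor and $(\nu^{ms})'=-m\delta\,\nu^{ms}$. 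The boundary evaluations at $s=d/j$ and $s=(d+1)/j$, combined with the prefactor ${_{k+1}p_x}\,q_{x+k}\,\nu^{mk}$ and after pulling out the common power $\nu^{md/j}$, produce precisely the two families of algebraic fractions appearing in the first two lines of the stated formula.

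The residual integral left by the integration by parts is $\int \nu^{ms}/(1-(1-s)q_{x+k})\,ds$. Writing $1-(1-s)q_{x+k}=q_{x+k}(s+p_{x+k}/q_{x+k})$ and making the affine substitution $u=s+p_{x+k}/q_{x+k}$ converts this to $(\nu^{-mp_{x+k}/q_{x+k}}/q_{x+k})\int e^{-m\delta u}/u\,du$, whose antiderivative is $\mathrm{Ei}(-m\delta u)$ by the definition \eqref{exp_int}. The identity $-p_{x+k}/q_{x+k}=1-1/q_{x+k}$ collapses $\nu^{mk}\cdot\nu^{-mp_{x+k}/q_{x+k}}$ to the stated power $\nu^{m(1+k-1/q_{x+k})}$, and evaluating the antiderivative at $u=(d+1)/j+p_{x+k}/q_{x+k}$ and $u=d/j+p_{x+k}/q_{x+k}$ yields exactly $\mathrm{Ei}_k(m\delta,d,j)$. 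Reassembling these pieces with the coefficient $-m\delta/q_{x+k}$ produced by the integration by parts reconstructs the last two lines of the formula.

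The analytical core is a single exponential-integral substitution, identical in spirit to the one already used in the proofs of Propositions \ref{prop:main} and \ref{increasing}; no genuinely new device is required here. Accordingly, the main obstacle is purely bookkeeping: carefully tracking signs and prefactors through the integration by parts, verifying that the affine shift indeed produces the asserted $\mathrm{Ei}_k(m\delta,d,j)$ after multiplication by the $\nu^{-mp_{x+k}/q_{x+k}}/q_{x+k}$ factor, and matching the two summation ranges over $(k,d)$ to the decomposition of the indicator region into its main and terminal families.
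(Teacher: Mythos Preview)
Your proposal is correct and follows essentially the same route as the paper: decompose the indicator region into the sub\-intervals of the refined partition, integrate $\nu^{mt}f_x(t)$ on each piece by parts to obtain the boundary fractions, and reduce the residual $\int \nu^{mt}/(1-(k+1-t)q_{x+k})\,dt$ to an exponential integral via the affine substitution. The only cosmetic difference is that the paper invokes its Lemma~\ref{lem:spec_int} for the last step, whereas you perform the same substitution inline.
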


\bigskip

{\sc Remark 6:} as previously, $p_{x+k}=0$ implies zero summands in Proposition \ref{prop:j_[T]_nu_x}. If $q_{x+k}=0$, then
\begin{align*}
&\lim_{\Delta\to 0}\frac{m\delta}{\Delta}\exp\left\{\frac{m\delta}{\Delta}\right\}\left(\mathrm{Ei}\left(-m\delta\left(\frac{1-\Delta}{\Delta}+\frac{d+1}{j}\right)\right)-\mathrm{Ei}\left(-m\delta\left(\frac{1-\Delta}{\Delta}+\frac{d}{j}\right)\right)\right)\\
&=\nu^{m(d/j-1)}\cdot(1-\nu^{m/j}),\,\Delta:=q_{x+k}.
\end{align*}

As mentioned in the Introduction, the provided formulas in Propositions \ref{prop:main}-\ref{prop:j_[T]_nu_x} can be modified in various desired ways. For example, because of the Proposition \ref{prop:j_times} and under its assumptions 
\begin{align*}
&\mathbb{E}\left(\frac{[T_x\,j]+1}{j}\right)^m\mathbbm{1}_{\{l*0\leqslant T_x<n+l*0\}}\\
&=\frac{1}{j}\sum_{k=l}^{n+l-1}\sum_{d=0}^{j-1}\left(k+\frac{d+1}{j}\right)^m\frac{_{k+1}p_x\cdot q_{x+k}}{\left(p_{x+k}+\frac{d}{j}\cdot q_{x+k}\right)\left(p_{x+k}+\frac{d+1}{j}\cdot q_{x+k}\right)}.
\end{align*}

\section{Some illustrative examples}\label{sec:examples}

In this section, we give two examples that verify Propositions \ref{prop:main}--\ref{prop:j_[T]_nu_x} based on some hypothetical survival laws. As mentioned in the Introduction, the obtained outputs are double-verified according to the formula \eqref{expectation}. These examples serve the purpose of convincing the correctness of the given computational formulas rather than reflecting on something from real life, see \cite{Brazilian}, \cite{AMJ}, \cite{CMSTM}, references therein, and many other sources for studying the survival laws of human populations. On the other hand, papers such as \cite{RSJ}, deal with some theoretical studies regarding the survival (tail) function. 

\bigskip

\begin{ex}
Let $l=0$, $i=5\%$, $n=10$, and
\begin{align*}
_kp_x=\frac{100-k}{100},\,k=1,\,2,\,\ldots,\,10.
\end{align*}
We compute the expectations given in Propositions \ref{prop:main}--\ref{prop:j_[T]_nu_x} when $m=1$ or $m=2$.
\end{ex}

\bigskip

The provided conditional survival function $_kp_x$ implies
\begin{align*}
q_{x+k}&=\frac{1}{100-k},\,k=0,\,1,\,\ldots,\,9.
\end{align*}

The formula \eqref{for_n_years_l_delay} in Proposition \ref{prop:main} yields:
\begin{align*}
\bar{A}^1_{x:\actuarialangle{10}}
\approx0.0791388,\qquad
^2\bar{A}^1_{x:\actuarialangle{10}}\approx0.063867.
\end{align*}
According to Proposition \ref{prop:T_x}:
\begin{align*}
\frac{l_x-l_{x+10}}{l_x}=\frac{1}{10},\qquad
{\mathop{e}\limits^\circ}_{x:\actuarialangle{10}}\approx0.499824,\qquad
\mathbb{E}T_x^2\mathbbm{1}_{\{T_x<10\}}\approx3.33155.
\end{align*}

Proposition \ref{increasing} gives:
\begin{align*}
\left(\bar{I}\bar{A}\right)_{x:\actuarialangle{10}}^1\approx0.363507,\qquad
^2\left(\bar{I}\bar{A}\right)_{x:\actuarialangle{10}}^1\approx1.63319.
\end{align*}

Proposition \ref{prop:[T]_nu} yields:
\begin{align*}
\left(I\bar{A}\right)_{x:\actuarialangle{10}}^1\approx0.403536,\qquad
^2\left(I\bar{A}\right)_{x:\actuarialangle{10}}^1\approx1.91788.
\end{align*}

If, in addition, $j=2$, then Propositions \ref{prop:j_times} and \ref{prop:j_[T]_nu_x} respectively give: 
\begin{align*}
&\left(\bar{A}^{(2)}\right)_{x:\actuarialangle{10}}^1\approx 0.0781758,\quad
^2\left(\bar{A}^{(2)}\right)_{x:\actuarialangle{10}}^1\approx 0.062319,\\
&\left(I^{(2)}\bar{A}\right)_{x:\actuarialangle{10}}^1\approx 0.766813,\quad
^2\left(I^{(2)}\bar{A}\right)_{x:\actuarialangle{10}}^1\approx7.08521.
\end{align*}

\begin{ex}
Let $l=1$, $x=0$, $i=5\%$, $n\to\infty$, and
\begin{align}\label{Weibul}
{_k}p_x=\exp\left\{\left(\frac{x}{\alpha}\right)^\beta-\left(\frac{k+x}{\alpha}\right)^\beta\right\},\,k\in\mathbb{N}_0,\,\alpha=50,\,\beta=3.
\end{align}
We compute the expectations given in Propositions \ref{prop:main}--\ref{prop:j_[T]_nu_x} when $m=1$ or $m=2$.
\end{ex}

\bigskip

The conditional probability distribution in \eqref{Weibul} is known as a discrete Weibull distribution with positive parameters $\alpha$ and $\beta$, see Figures \ref{fig:1}, \ref{fig:2}, and sources \cite{Weibull}, \cite{Weibull_1}, on some recent studies regarding this distribution.
\begin{figure}[H]
    \centering
    \begin{minipage}{0.5\textwidth}\
        \centering
        \includegraphics[width=0.9\textwidth]{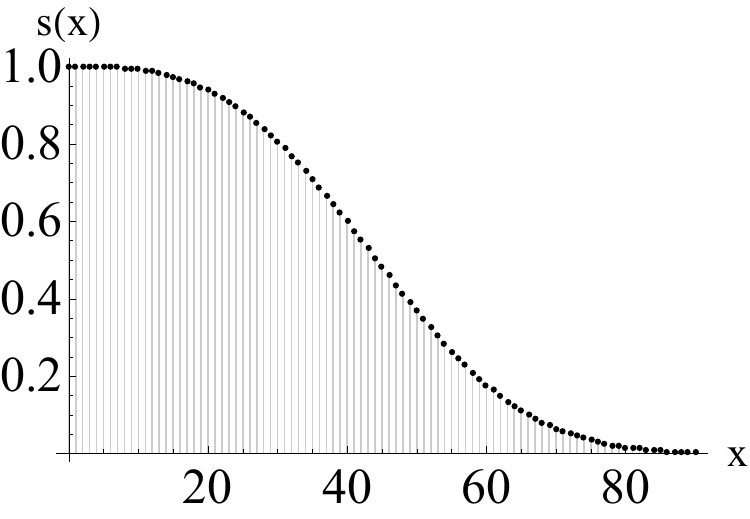} 
        \caption{The survival function $s(x)=$\\ $\exp\{-(x/\alpha)^\beta\}$, $x\in\mathbb{N}_0$, $\alpha=50$, $\beta=3$.}\label{fig:1}
    \end{minipage}\hfill
    \begin{minipage}{0.5\textwidth}
        \centering
        \includegraphics[width=0.9\textwidth]{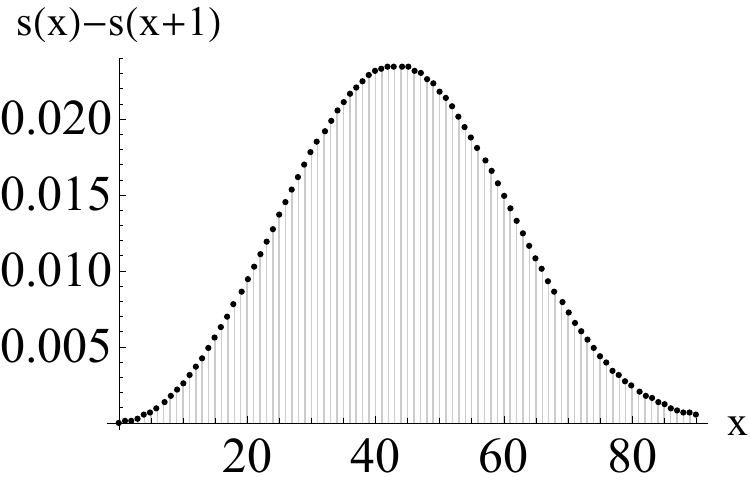}
        \caption{The probability mass function $\mathbb{P}(X=x)=s(x)-s(x+1)$, $x\in\mathbb{N}_0$.}
        \label{fig:2}
    \end{minipage}
\end{figure}
The provided mortality law implies
\begin{align*}
p_{x+k}=\exp\left\{\left(\frac{k+x}{\alpha}\right)^\beta-\left(\frac{k+1+x}{\alpha}\right)^\beta\right\},\,x,\,k\in\mathbb{N}_0,\,\alpha=50,\,\beta=3.
\end{align*}

Thus, according to Proposition \ref{prop:main}:
\begin{align*}
_{1|}\bar{A}_{0}
\approx0.152212,\qquad
_{1|}^2\bar{A}_{0}\approx0.0381506.
\end{align*}

Proposition \ref{prop:T_x}:
\begin{align*}
\lim_{n\to\infty}\frac{l_{1}-l_{1+n}}{l_0}=e^{-0.000008}\approx0.999992,\quad
_{1|}{\mathop{e}\limits^\circ}_0\approx44.6399,\quad
\mathbb{E}T_0^2\mathbbm{1}_{\{1\leqslant T_0\}}\approx2256.03.
\end{align*}

Proposition \ref{increasing}:
\begin{align*}
_{1|}\left(\bar{I}\bar{A}\right)_{0}\approx5.01701,\qquad
_{1|}^2\left(\bar{I}\bar{A}\right)_{0}\approx28.0812.
\end{align*}

Proposition \ref{prop:[T]_nu}:
\begin{align*}
_{1|}\left(I\bar{A}\right)_{0}\approx5.09453,\qquad
_{1|}^2\left(I\bar{A}\right)_{0}\approx29.0377.
\end{align*}

If, in addition, $j=12$ and $n_1=2$ (there is a deferment of a year and two months), then Propositions \ref{prop:j_times} and \ref{prop:j_[T]_nu_x} respectively give: 
\begin{align*}
&_{1*2|}\left(\bar{A}^{(12)}\right)_{0}\approx0.125392 ,\quad
_{1*2|}^2\left(\bar{A}^{(12)}\right)_{0}\approx0.0313064,\\
&_{1*2|}\left(I^{(12)}\bar{A}\right)_{0}\approx10.1107 ,\quad
_{1*2|}^2\left(I^{(12)}\bar{A}\right)_{0}\approx114.207.
\end{align*}

\section{Lemmas}
In this section, we formulate and prove three auxiliary statements. Let us recall that
\begin{align*}
\delta=\log(1+i),\,\nu=\frac{1}{1+i},\,i>-1.
\end{align*}
and 
\begin{align*}
\mathrm{Ei}(y)=-\int\limits_{-y}^{\infty}\frac{e^{-z}}{z}\,dz=\int\limits_{-\infty}^{y}\frac{e^z}{z}\,dz,\,y\in\mathbb{R}\setminus\{0\}.
\end{align*}

\begin{lem}\label{lem:spec_int}
Let $x,\,k\in\mathbb{N}_0$, $m,\,j\in\mathbb{N}$, $d\in\{0,\,1,\,\ldots,\,j-1\}$, and suppose that $p_{x+k}>0$ and $q_{x+k}>0$. Then
\begin{align}
\int\limits_{k+d/j}^{k+(d+1)/j}\frac{\nu^{m\,t}\,dt}{1-(1+k-t)\cdot q_{x+k}}=\frac{\nu^{m(1+k-1/q_{x+k})}}{q_{x+k}}\mathrm{Ei}_k(m\delta,\,d,\,j),
\end{align}
where
\begin{align*}
\mathrm{Ei}_k(\delta,\,d,\,j)=\mathrm{Ei}\left(-\delta\left(\frac{p_{x+k}}{q_{x+k}}+\frac{d+1}{j}\right)\right)
-
\mathrm{Ei}\left(-\delta\left(\frac{p_{x+k}}{q_{x+k}}+\frac{d}{j}\right)\right).
\end{align*}
\end{lem}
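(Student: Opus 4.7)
The plan is to convert the integrand into the canonical form $e^{z}/z$ via a single linear substitution, so that the integral becomes a difference of exponential integrals. First I would rewrite the denominator as $1-(1+k-t)q_{x+k}=p_{x+k}+(t-k)q_{x+k}$ and the numerator as $\nu^{mt}=e^{-m\delta t}$, since $\nu=e^{-\delta}$. This puts the integrand into the shape $e^{-m\delta t}/(p_{x+k}+(t-k)q_{x+k})$, which is begging for a linear change of variables in which the new variable is proportional to the denominator.

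Next I would set $w=1-(1+k-t)q_{x+k}$, so that $dw=q_{x+k}\,dt$ and $t=1+k-(1-w)/q_{x+k}$. Under this substitution the exponent in the numerator splits as
\begin{align*}
-m\delta t = -m\delta(1+k) + \frac{m\delta}{q_{x+k}} - \frac{m\delta\,w}{q_{x+k}},
\end{align*}
so that the constant part pulls out of the integral as exactly the desired prefactor
\begin{align*}
\frac{1}{q_{x+k}}\,e^{-m\delta(1+k)+m\delta/q_{x+k}} \;=\; \frac{\nu^{m(1+k-1/q_{x+k})}}{q_{x+k}}.
\end{align*}
The remaining integrand is $e^{-m\delta w/q_{x+k}}/w\,dw$. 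A second substitution $z=-m\delta w/q_{x+k}$ (with $dz/z=dw/w$ and $-m\delta w/q_{x+k}=z$) turns this into $e^{z}/z\,dz$, which is exactly the integrand of $\mathrm{Ei}$ as defined in \eqref{exp_int}.

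Finally I would track the limits of integration. At $t=k+d/j$ one gets $w=p_{x+k}+(d/j)q_{x+k}$ and hence $z=-m\delta(p_{x+k}/q_{x+k}+d/j)$; at $t=k+(d+1)/j$ one gets $z=-m\delta(p_{x+k}/q_{x+k}+(d+1)/j)$. Therefore the integral evaluates to
\begin{align*}
\frac{\nu^{m(1+k-1/q_{x+k})}}{q_{x+k}}\Bigl[\mathrm{Ei}\bigl(-m\delta(p_{x+k}/q_{x+k}+(d+1)/j)\bigr)-\mathrm{Ei}\bigl(-m\delta(p_{x+k}/q_{x+k}+d/j)\bigr)\Bigr],
\end{align*}
which is precisely the claimed expression. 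The main obstacle is nothing conceptual but rather arithmetic hygiene: keeping signs straight through the two substitutions and verifying that the constant factors pulled out of the integral combine to the closed form $\nu^{m(1+k-1/q_{x+k})}/q_{x+k}$. The hypotheses $p_{x+k}>0$ and $q_{x+k}>0$ ensure that $w$ stays strictly positive on the interval of integration, so that $z$ stays strictly negative and the singularity of $\mathrm{Ei}$ at the origin is avoided, making both substitutions legitimate.
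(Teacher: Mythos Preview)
Your proof is correct and follows essentially the same route as the paper: a linear substitution $w=1-(1+k-t)q_{x+k}$ (the paper calls it $y$) to pull out the constant prefactor $\nu^{m(1+k-1/q_{x+k})}/q_{x+k}$, followed by a rescaling that puts the remaining integrand into the canonical $e^{z}/z$ form of $\mathrm{Ei}$. The only cosmetic difference is that the paper combines the two substitutions into one step by writing $d(\delta m y/q_{x+k})$ directly, whereas you make the second substitution $z=-m\delta w/q_{x+k}$ explicit.
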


\begin{proof}
The proof is straightforward
\begin{align*}
&\int\limits_{k+d/j}^{k+(d+1)/j}\frac{\nu^{m\,t}\,dt}{1-(1+k-t)\cdot q_{x+k}}\\
&=\frac{\nu^{m(1+k-1/q_{x+k})}}{q_{x+k}}
\int\limits_{p_{x+k}+d/j \cdot q_{x+k}}^{p_{x+k}+(d+1)/j \cdot q_{x+k}}
\frac{e^{-\delta\,m\,y/q_{x+k}}}{\delta\,m\,y/q_{x+k}}\,d\left(\frac{\delta \, m\,y}{q_{x+k}}\right)\\
&=\frac{\nu^{m(1+k-1/q_{x+k})}}{q_{x+k}}
\left(
\mathrm{Ei}\left(-m\delta\left(\frac{p_{x+k}}{q_{x+k}}+\frac{d+1}{j}\right)\right)
-
\mathrm{Ei}\left(-m\delta\left(\frac{p_{x+k}}{q_{x+k}}+\frac{d}{j}\right)\right)
\right).
\end{align*}
\end{proof}

\begin{lem}\label{lem:j_times}
Let $x,\,a\in\mathbb{N}_0$, $j\in\mathbb{N}$, and $d=0,\,1,\,\ldots,\,j-1$. Then, under Balducci's mortality assumption \eqref{Balducci},
\begin{align*}
_{a+\frac{d}{j}}p_x-_{a+\frac{d+1}{j}}p_x
=_{a+1}p_x\cdot q_{x+a}\cdot\frac{1}{j}\cdot\frac{1}{p_{x+a}+\frac{d}{j}\cdot q_{x+a}}\cdot\frac{1}{p_{x+a}+\frac{d+1}{j}\cdot q_{x+a}}.
\end{align*}
\end{lem}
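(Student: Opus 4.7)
The plan is to first obtain a closed-form expression for $_{a+t}p_x$ when $t\in[0,1]$ under Balducci's interpolation, and then substitute $t=d/j$ and $t=(d+1)/j$ and subtract. This has essentially already been used implicitly in the derivation of the conditional density \eqref{cond_dens_Bal}, so I expect no real obstacle.

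First I would invoke \eqref{Balducci} with parameter $k=a$ to write
\begin{align*}
\frac{1}{{_{a+t}}p_x}=\frac{s(x)}{s(x+a+t)}=\frac{(1-t)\cdot{_{a+1}}p_x+t\cdot{_a}p_x}{{_a}p_x\cdot{_{a+1}}p_x},
\end{align*}
and then invert and divide top and bottom by ${_a}p_x$, using ${_{a+1}}p_x/{_a}p_x=p_{x+a}$ and $1-p_{x+a}=q_{x+a}$, to obtain the clean identity
\begin{align*}
{_{a+t}}p_x=\frac{{_{a+1}}p_x}{p_{x+a}+t\cdot q_{x+a}},\qquad t\in[0,1].
\end{align*}

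Next I would apply this with $t=d/j$ and with $t=(d+1)/j$, both of which lie in $[0,1]$ since $d\in\{0,1,\ldots,j-1\}$, and subtract. Bringing the two fractions to a common denominator, the numerator telescopes to ${_{a+1}}p_x\cdot\bigl((d+1)/j-d/j\bigr)\cdot q_{x+a}={_{a+1}}p_x\cdot q_{x+a}/j$, giving exactly the stated right-hand side.

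The only thing to check is that the denominators do not vanish, which holds whenever the factors $p_{x+a}+(d/j)q_{x+a}$ and $p_{x+a}+((d+1)/j)q_{x+a}$ are positive; this is automatic when $p_{x+a}>0$ and otherwise must be covered by the usual boundary convention (as in Remarks 1--2). Since this is the only subtlety and it is handled by the same limiting conventions used throughout the paper, the entire argument reduces to one interpolation identity and an algebraic subtraction.
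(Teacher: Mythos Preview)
Your proof is correct and follows essentially the same approach as the paper: both arguments insert Balducci's interpolation into the two fractional-age survival probabilities and then subtract and simplify. The paper works directly with $s(x+a+d/j)-s(x+a+(d+1)/j)$ and divides by $s(x)$ at the end, whereas you first isolate the identity ${_{a+t}}p_x={_{a+1}}p_x/(p_{x+a}+t\,q_{x+a})$ (already implicit in \eqref{cond_dens_Bal}) and then the subtraction is a one-line telescoping; the content is the same.
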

\begin{proof}
Since $a+x\in\mathbb{N}_0$, $d/j\in[0,\,1)$, and $(d+1)/j\in(0,\,1]$, we then apply the interpolation \eqref{Balducci} and obtain
\begin{align*}
&s\left(x+a+\frac{d}{j}\right)-s\left(x+a+\frac{d+1}{j}\right)\\
&=\frac{s(x+a)s(x+a+1)}{\frac{d}{j}\cdot s(x+a)+\left(1-\frac{d}{j}\right)\cdot s(x+a+1)}
-\frac{s(x+a)s(x+a+1)}{\frac{d+1}{j}\cdot s(x+a)+\left(1-\frac{d+1}{j}\right)\cdot s(x+a+1)}\\
&=s(x)\cdot _{a+1}p_x\cdot q_{x+a}\cdot j\cdot
\frac{1}{d\cdot q_{x+a}+j\cdot p_{x+a}}\cdot 
\frac{1}{(d+1)\cdot q_{x+a}+j\cdot p_{x+a}}.
\end{align*}
\end{proof}

\begin{lem}\label{lem:no_greater}
Let $g(t)$ be the real, differentiable and non-increasing function over the intervals $t\in[0,\,1)\cup [1,\,2),\, \ldots$ Then the expected value of $g(T_x)$ under Balducci's mortality assumption is never less than $\mathbb{E} g(T_x)$ under the UDD assumption. 

    Conversely, suppose that the function $g(t)$ is non-decreasing under the same conditions. In that case, the expected value of $g(T_x)$ under Balducci's mortality assumption is never greater than $\mathbb{E} g(T_x)$ under the UDD assumption. 
\end{lem}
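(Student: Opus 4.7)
The plan is to compare the two expectations interval by interval, exploiting the fact that the UDD and Balducci interpolations coincide at integer ages. First I would observe that, for every $k \in \mathbb{N}_0$, both \eqref{UDD} and \eqref{Balducci} give
\begin{align*}
\mathbb{P}(k \leq T_x < k+1) = {_kp}_x - {_{k+1}p}_x,
\end{align*}
since the two interpolations agree at $t=0$ and $t=1$. Thus both models assign identical total mass to each unit interval $[k,k+1)$, and the sole difference is how this mass is distributed inside the interval.

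Next I would examine the partial conditional CDF $F(k+t) := \mathbb{P}(k \leq T_x \leq k+t)$, $t \in [0,1]$, under both interpolations. Under UDD this is the linear function $F^{\mathrm{UDD}}(k+t) = t({_kp}_x - {_{k+1}p}_x)$, while under Balducci
\begin{align*}
F^{\mathrm{Bal}}(k+t) = {_kp}_x - \frac{{_{k+1}p}_x}{p_{x+k} + t\cdot q_{x+k}},
\end{align*}
which is concave in $t$ on $[0,1]$ because its second derivative equals $-2\,{_{k+1}p}_x\,q_{x+k}^2/(p_{x+k}+t q_{x+k})^3 \leq 0$. A direct check (using ${_{k+1}p}_x = {_kp}_x \cdot p_{x+k}$ and $p_{x+k}+q_{x+k}=1$) shows that both CDFs agree at $t=0$ and $t=1$, so concavity forces $F^{\mathrm{Bal}}(k+t) \geq F^{\mathrm{UDD}}(k+t)$ on $[0,1]$ (a concave function lies above its chord).

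Then I would apply integration by parts on each closed interval $[k,k+1]$. Writing $\Delta F(u) := F^{\mathrm{Bal}}(u) - F^{\mathrm{UDD}}(u) \geq 0$, which vanishes at $u=k$ and $u=k+1$, one obtains
\begin{align*}
\int_k^{k+1} g(u)\bigl[f_x^{\mathrm{Bal}}(u) - f_x^{\mathrm{UDD}}(u)\bigr]\,du = -\int_k^{k+1} g'(u)\,\Delta F(u)\, du,
\end{align*}
the boundary terms cancelling because $\Delta F$ is zero at both endpoints. If $g$ is non-increasing on $[k,k+1)$ then $g' \leq 0$, and the integrand on the right is non-negative; hence the left side is non-negative. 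Summing over $k \in \mathbb{N}_0$ (the series converging under the implicit integrability of $g(T_x)$) yields $\mathbb{E}g(T_x)$ under Balducci at least as large as $\mathbb{E}g(T_x)$ under UDD. The non-decreasing case follows by applying the result to $-g$.

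The main obstacle I anticipate is only technical: the hypothesis grants differentiability of $g$ merely on each half-open interval $[k,k+1)$, so $g$ could have jumps at integer points. But since the CDFs $F^{\mathrm{Bal}}$ and $F^{\mathrm{UDD}}$ are continuous at these points and both partial CDFs vanish at the same endpoints, the boundary contributions in the integration by parts are the same on both sides and cancel when one takes the difference; the rest of the argument is then purely pointwise.
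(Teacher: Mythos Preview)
Your proof is correct and follows essentially the same route as the paper's: integrate by parts on each interval $[k,k+1]$ and exploit the concavity/convexity of the Balducci interpolation together with the fact that both interpolations agree at integer ages. Your presentation is in fact slightly cleaner, since you work directly with the difference $\Delta F$, which vanishes at both endpoints, whereas the paper carries an auxiliary function $z(t)=\text{const}-\Delta F(t)$ that is merely constant at the endpoints and therefore has to combine a nonzero boundary term with a $\max z$ estimate to reach the same inequality.
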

\begin{proof}
Let $k\in\mathbb{N}_0$. Then, for any $l_x>0$, $x\in\mathbb{N}_0$ and $k$ such that $q_{x+k}<1$, we have
\begin{align*}
{\mathfrak I}&:=\int_{k}^{k+1}\left(\frac{_{k+1}p_x\cdot q_{x+k}}{(1-(k+1-t)q_{x+k})^2}-\frac{d_{x+k}}{l_x}\right)g(t)\,dt\\
&=\int_{k}^{k+1}g(t)\,d\left(-\frac{_{k+1}p_x}{1-(k+1-t)q_{x+k}}-\frac{d_{x+k}}{l_x}t\right)
=-\int_{k}^{k+1}g(t)\,d \,z(t),
\end{align*}
where
\begin{align*}
z(t)=\frac{_{k+1}p_x}{1-(k+1-t)q_{x+k}}+\frac{d_{x+k}}{l_x}t\geqslant0,\,k\leqslant t< k+1.
\end{align*}
It is easy to check $z(k)=z(k+1)={_k}p_x+\frac{d_{x+k}}{l_x}\cdot k$. Then, upon the integration by parts,
\begin{align}\label{ineq:estimate}\nonumber
{\mathfrak I}&=-g(k+1)z(k+1)+g(k)z(k)+\int_{k}^{k+1}z(t)\,g'(t)\,dt\\
&=\left(g(k)-g(k+1)\right)\left(_{k}p_x+\frac{d_{x+k}}{l_x}\cdot k\right)
+\int_{k}^{k+1}z(t)\,g'(t)\,dt.
\end{align}
If $g'(t)\leqslant0$ in \eqref{ineq:estimate}, then 
\begin{align}\label{ineq:estimate_1}
{\mathfrak I}\geqslant\left(g(k)-g(k+1)\right)\left(_{k}p_x+\frac{d_{x+k}}{l_x}\cdot k\right)+\max_{0\leqslant t< 1}z(t) \cdot (g(k+1)-g(k)).
\end{align}
Since $z(k)=z(k+1)=_{k}p_x+\frac{d_{x+k}}{l_x}\cdot k$ and the second derivative
\begin{align*}
z''(t)=\frac{2\cdot _{k+1}p_x \cdot q_{x+k}^2}{(1-(k+1-t)\,q_{x+k})^3}\geqslant 0
\end{align*}
for all $k\leqslant t < k+1$, the function $z(t)$ is not concave down and consequently $\max_{0\leqslant t < 1} z(t)=_{k}p_x+\frac{d_{x+k}}{l_x}\cdot k$. Thus, ${\mathfrak I}\geqslant 0$ due to \eqref{ineq:estimate_1}.

Conversely, if $g'(t)\geqslant0$ in \eqref{ineq:estimate}, then 
\begin{align*}
{\mathfrak I}\leqslant\left(g(k)-g(k+1)\right)\left(_{k}p_x+\frac{d_{x+k}}{l_x}\cdot k\right)+\max_{0\leqslant t< 1}z(t) \cdot (g(k+1)-g(k))=0
\end{align*}
due to the same argumentation.
\end{proof}

\section{Propositions' proofs}
In this Section, we prove all propositions formulated in Section \ref{sec:results}. 
\begin{proof}[Proof of Proposition \ref{prop:main}]
Given the density under Balducci's assumption \eqref{cond_dens_Bal}, we obtain

\begin{align*}
_{l|}^m\bar{A}_{x:\actuarialangle{n}}^1&=\int\limits_{l}^{n+l}\nu^{m\,u}\,f_x(u)\,du
=\sum_{k=l}^{l+n-1}\int\limits_{k}^{k+1}\nu^{m\,t}\,f_x(t)\,dt\\
&=\sum_{k=l}^{l+n-1}
{_{k+1}p}_x\cdot q_{x+k}\int\limits_{k}^{k+1}\frac{\nu^{t\,m}\,dt}{(1-(k+1-t)\cdot q_{x+k})^2},\,n\in\mathbb{N}.
\end{align*}
According to Lemma \ref{lem:spec_int} with $j=1$  and $d=0$, the last integral, multiplied by $q_{x+k}\neq0$, is
\begin{align*}
&q_{x+k}\int\limits_{k}^{k+1}\frac{\nu^{m\, t}\,dt}{(1-(k+1-t)\cdot q_{x+k})^2}
=-\int\limits_{k}^{k+1}\nu^{m\, t}d\left(\frac{1}{1-(k+1-t)\cdot q_{x+k}}\right)\\
&=\frac{\nu^{m\, k}}{p_{x+k}}-\nu^{m(k+1)}-\delta m\int\limits_{k}^{k+1}
\frac{\nu^{m\, t}\,dt}{1-(k+1-t)\cdot q_{x+k}}\\
&=\frac{\nu^{m\, k}}{p_{x+k}}-\nu^{m(k+1)}
+\delta m\,\frac{\nu^{m(1+k-1/q_{x+k})}}{q_{x+k}}
\mathrm{Ei}_k(m\delta),
\end{align*}
where the difference of exponent integrals $\mathrm{Ei}_k(\delta)$ is given in \eqref{eq:exp_dif}. Then
\begin{align*}
&^m_{l|}\bar{A}_{x:\actuarialangle{n}}^1=\\
&\sum_{k=l}^{l+n-1}\nu^{m\,k}\cdot {_kp}_x-\sum_{k=l}^{l+n-1}\nu^{m(k+1)}\cdot {_{k+1}p}_x
+\delta m\sum_{k=l}^{l+n-1}\nu^{m(1+k-1/q_{x+k})}\cdot\frac{{_{k+1}p}_x}{q_{x+k}}
\cdot \mathrm{Ei}_k(m\delta)\\
&=\nu^{ml}\cdot _l p_x-\nu^{m(l+n)}\cdot _{l+n}p_x+\delta m\sum_{k=l}^{l+n-1}\nu^{m(1+k-1/q_{x+k})}\cdot\frac{{_{k+1}p}_x}{q_{x+k}}\cdot \mathrm{Ei}_k(m\delta).
\end{align*}
This proves the equality \eqref{for_n_years_l_delay} while \eqref{till_the_end_of_life_l} is implied by \eqref{for_n_years_l_delay} as $n\to\infty$.
\end{proof}

\begin{proof}[Proof of Proposition \ref{prop:T_x}]
Eq. \eqref{T_x_m} is implied due to
\begin{align*}
&\mathbb{E}\left(T_x\right)^m\mathbbm{1}_{\{l\leqslant T_x< n+l\}}=\sum_{k=l}^{l+n-1} {_{k+1}}p_x\cdot q_{x+k}\int\limits_{k}^{k+1}\frac{t^m\, dt}{\left(1-(k+1-t)\cdot q_{x+k}\right)^2},
\end{align*}
where
\begin{align*}
&q_{x+k}\int\limits_{k}^{k+1}\frac{t^m dt}{\left(1-(k+1-t)\cdot q_{x+k}\right)^2}\nonumber\\
&=\frac{k^m}{p_{x+k}}-(k+1)^m+m\int_{k}^{k+1}\frac{t^{m-1}dt}{1-(k+1-t)\cdot q_{x+k}},\,m\in\mathbb{N}.
\end{align*}

As $T_x$ is the continuous random variable, eq. \eqref{0} is straightforward due to
\begin{align*}
\mathbb{E}\mathbbm{1}_{\{l\leqslant T_x< n+l\}}
=\sum_{k=l}^{l+n-1}\mathbb{P}(k\leqslant T_x < k+1)
\end{align*}
and elementary rearrangements.

Eq. \eqref{1} is a corollary of \eqref{T_x_m} with $m=1$, where 
\begin{align*}
\sum_{k=l}^{l+n-1}\left(_kp_x\cdot k-_{k+1}p_x\cdot (k+1)\right)=l\cdot _lp_x-(n+l)\cdot _{n+l}p_x
\end{align*}
and
\begin{align*}
\int_{k}^{k+1}\frac{dt}{1-(k+1-t)\cdot q_{x+k}}=-\frac{\log p_{x+k}}{q_{x+k}}.
\end{align*}

Eq. \eqref{2} is a corollary of \eqref{T_x_m} too with $m=2$, where 
\begin{align*}
\sum_{k=l}^{l+n-1}\left(_kp_x\cdot k^2-_{k+1}p_x\cdot (k+1)^2\right)=l^2\cdot _lp_x-(n+l)^2\cdot _{n+l}p_x
\end{align*}
and
\begin{align*}
\int_{k}^{k+1}\frac{t\,dt}{1-(k+1-t)\cdot q_{x+k}}=\frac{q_{x+k}+(p_{x+k}-k\cdot q_{x+k})\cdot\log p_{x+k}}{q^2_{x+k}}.
\end{align*}
\end{proof}

\begin{proof}[Proof of Proposition \ref{increasing}]
According to the proof of Proposition \ref{prop:main}, we shall compute
\begin{align*}
\mathbb{E}\,T_x\cdot \nu^{T_x}\mathbbm{1}_{\{l\leqslant T_x \leqslant n+l\}}
&=
\sum_{k=l}^{l+n-1} {_{k+1}}p_x\cdot q_{x+k}\int\limits_{k}^{k+1}\frac{t\cdot\nu^t\, dt}{\left(1-(k+1-t)\cdot q_{x+k}\right)^2}.
\end{align*}

The last integral, multiplied by $q_{x+k}$, is

\begin{align*}
&I:=\frac{k\cdot \nu^k}{p_{x+k}}-(k+1)\cdot\nu^{k+1}+\int\limits_{k}^{k+1}\frac{\nu^t-\delta\cdot t \cdot \nu^t\,dt}{1-(k+1-t)\cdot q_{x+k}}
\\
&=\frac{k\cdot \nu^k}{p_{x+k}}-(k+1)\cdot\nu^{k+1}
+\frac{\nu^{1+k-1/q_{x+k}}}{q_{x+k}}
\left(
\mathrm{Ei}\left(-\frac{\delta}{q_{x+k}}\right)-\mathrm{Ei}\left(-\frac{\delta\cdot p_{x+k}}{q_{x+k}}\right)
\right)\\
&-\delta\int\limits_{k}^{k+1}\frac{t\cdot \nu^t\,dt}{1-(k+1-t)\cdot q_{x+k}}.
\end{align*}
Due to the change of variable $1-(k+1-t)q_{x+k}=y$ and rearrangements, the last integral, multiplied by $-\delta$, equals to
\begin{align*}
&-\frac{\delta\nu^{1+k-1/q_{x+k}}}{q_{x+k}}\int\limits_{p_{x+k}}^{1}\frac{(1+k-1/q_{x+k}+y/q_{x+k})\,\nu^{y/q_{x+k}}}{y/q_{x+k}}\,d\left(\frac{y}{q_{x+k}}\right)\\
&=-\delta\cdot\frac{\nu^{1+k-1/q_{x+k}}}{q_{x+k}}\left(\left(1+k-\frac{1}{q_{x+k}}\right)
\int\limits_{p_{x+k}/q_{x+k}}^{1/q_{x+k}}\frac{e^{-\delta z}}{z}\,dz+
\int\limits_{p_{x+k}/q_{x+k}}^{1/q_{x+k}}e^{-\delta z}\,dz
\right)\\
&=-\delta\cdot\frac{\nu^{1+k-1/q_{x+k}}}{q_{x+k}}
\left(
\left(1+k-\frac{1}{q_{x+k}}\right)
\left(\mathrm{Ei}\left(-\frac{\delta}{q_{x+k}}\right)-\mathrm{Ei}\left(-\frac{\delta\cdot p_{x+k}}{q_{x+k}}\right)\right)+\frac{i}{\delta}\nu^{1/q_{x+k}}
\right)\\
&=-\delta\cdot \frac{\nu^{1+k-1/q_{x+k}}}{q_{x+k}}\left(1+k-\frac{1}{q_{x+k}}\right)\left(\mathrm{Ei}\left(-\frac{\delta}{q_{x+k}}\right)-\mathrm{Ei}\left(-\frac{\delta\cdot p_{x+k}}{q_{x+k}}\right)\right)
-\frac{i\cdot \nu^{1+k}}{q_{x+k}}.
\end{align*}
Thus
\begin{align*}
I&=\frac{k\cdot \nu^k}{p_{x+k}}-\left(1+k+\frac{i}{q_{x+k}}\right)\nu^{k+1}\\
&+\left(1-\delta\left(1+k-\frac{1}{q_{x+k}}\right)\right)
\frac{\nu^{1+k-1/q_{x+k}}}{q_{x+k}}\left(\mathrm{Ei}\left(-\frac{\delta}{q_{x+k}}\right)-\mathrm{Ei}\left(-\frac{\delta\cdot p_{x+k}}{q_{x+k}}\right)\right),
\end{align*}
and finally 
\begin{align*}
&\mathbb{E}\,T_x\cdot \nu^{T_x}\mathbbm{1}_{\{l\leqslant T_x \leqslant n+l\}}
=\sum_{k=l}^{l+n-1}k\cdot _kp_x\cdot \nu^k-\sum_{k=l}^{l+n-1}\left(1+k+\frac{i}{q_{x+k}}\right)\cdot _{k+1}p_x\cdot \nu^{k+1}+\\
&\sum_{k=l}^{l+n-1}\left(1-\delta\left(1+k-\frac{1}{q_{x+k}}\right)\right)
\frac{_{k+1}p_x}{q_{x+k}}\nu^{1+k-\frac{1}{q_{x+k}}}\left(\mathrm{Ei}\left(-\frac{\delta}{q_{x+k}}\right)-\mathrm{Ei}\left(-\frac{\delta\cdot p_{x+k}}{q_{x+k}}\right)\right)\\
&=_lp_x\cdot l \cdot \nu^l-_{l+n}p_x\cdot (l+n) \cdot \nu^{l+n}-i\sum_{k=l}^{l+n-1}\frac{_{k+1}p_x}{q_{x+k}}\cdot \nu^{k+1}+\\
&\sum_{k=l}^{l+n-1}\left(1-\delta\left(1+k-\frac{1}{q_{x+k}}\right)\right)
\frac{_{k+1}p_x}{q_{x+k}}\nu^{1+k-\frac{1}{q_{x+k}}}\left(\mathrm{Ei}\left(-\frac{\delta}{q_{x+k}}\right)-\mathrm{Ei}\left(-\frac{\delta\cdot p_{x+k}}{q_{x+k}}\right)\right).
\end{align*}

We now compute the second moment
\begin{align*}
\mathbb{E}\,T_x^2\cdot \nu^{2T_x}\mathbbm{1}_{\{l\leqslant T_x \leqslant n+l\}}=
\sum_{k=l}^{l+n-1} {_{k+1}}p_x\cdot q_{x+k}\int\limits_{k}^{k+1}\frac{t^2\cdot\nu^{2t}\, dt}{\left(1-(k+1-t)\cdot q_{x+k}\right)^2}.
\end{align*}
The last integral, multiplied by $q_{x+k}$, is
\begin{align*}
\frac{k^2\cdot\nu^{2k}}{p_{x+k}}-(k+1)^2\cdot \nu^{2(k+1)}+
\int_{k}^{k+1}\frac{2t\cdot\nu^{2t}-2\delta\cdot t^2\cdot\nu^{2t}\,dt}{1-(k+1-t)\cdot q_{x+k}}.
\end{align*}
Let us denote 
\begin{align*}
I_{1,\,k}:=\int_{k}^{k+1}\frac{2t\cdot\nu^{2t}\,dt}{1-(k+1-t)\cdot q_{x+k}},
\quad
I_{2,\,k}=-\int_{k}^{k+1}\frac{2\delta\cdot t^2\cdot\nu^{2t}\,dt}{1-(k+1-t)\cdot q_{x+k}}.
\end{align*}
Then, omitting the details of elementary rearrangements, we obtain
\begin{align}\label{I_1}\nonumber
&I_{1,\,k}=\frac{\nu^{2k}}{q_{x+k}}\times\\
&\left(\frac{1-\nu^2}{\delta}+2\left(1+k-\frac{1}{q_{x+k}}\right)\nu^{2-\frac{2}{q_{x+k}}}\left(\mathrm{Ei}\left(-\frac{2\delta}{q_{x+k}}\right)-\mathrm{Ei}\left(-\frac{2\delta\cdot p_{x+k}}{q_{x+k}}\right)\right)\right),
\end{align}
\begin{align}\label{I_2}
&I_{2,\,k}=\frac{\nu^{2k}}{q_{x+k}}\Bigg(\frac{1-\nu^2}{q_{x+k}}
+\frac{-1+\nu^2+\delta\cdot (-2+4\nu^2+4k(-1+\nu^2))}{2\delta}
\nonumber\\
&-2\delta\left(1+k-\frac{1}{q_{x+k}}\right)^2\nu^{2-2/q_{x+k}}
\left(
\mathrm{Ei}\left(-\frac{2\delta}{q_{x+k}}\right)-\mathrm{Ei}\left(-\frac{2\delta\cdot p_{x+k}}{q_{x+k}}\right)
\right)
\Bigg).
\end{align}
Thus,
\begin{align*}
&\mathbb{E}\,T_x^2\cdot \nu^{2T_x}\mathbbm{1}_{\{l\leqslant T_x \leqslant n+l\}}\\
&=l^2\cdot \nu^{2l}\cdot _lp_x-(l+n)^2\cdot\nu^{2(l+n)}\cdot _{l+n}p_x+
\sum_{k=l}^{n+l-1}\, _{k+1}p_x\cdot \left(I_{1,\,k}+I_{2,\,k}\right)
\end{align*}
\end{proof}

\begin{proof}[Proof of Proposition \ref{prop:[T]_nu}]
Arguing the same as before, we get
\begin{align*}
&\mathbb{E}\,([T_x+1]\nu^{T_x})^m\mathbbm{1}_{\{l\leqslant T_x\leqslant n+l\}}\\
&=\sum_{k=l}^{l+n-1} {_{k+1}}p_x \cdot q_{x+k} \cdot (k+1)^m \cdot \int_{k}^{k+1}\frac{\nu^{tm}\,dt}{(1-(k+1-t)\cdot q_{x+k})^2}\\
&=\sum_{k=l}^{l+n-1}{_{k+1}}p_x\cdot (k+1)^m
\left(
\frac{\nu^{mk}}{p_{x+k}}-\nu^{m(k+1)}+\delta m \cdot\frac{\nu^{m(1+k-1/q_{x+k})}}{q_{x+k}}\cdot \mathrm{Ei}(m\delta)
\right)\\
&=\sum_{k=l}^{l+n-1} {_k}p_x\cdot (k+1)^m \cdot \nu^{mk}\cdot(1-p_{x+k}\cdot \nu^m)\\
&+\delta m \sum_{k=l}^{l+n-1} {_{k+1}}p_x\cdot (k+1)^m\cdot \frac{v^{m(1+k-1/q_{x+k})}}{q_{x+k}}\cdot \mathrm{Ei}_k(m\delta).
\end{align*}
\end{proof}

\begin{proof}[Proof of Proposition \ref{prop:j_times}]

The proof is straightforward because of Lemma \ref{lem:j_times}

\begin{align*}
&\mathbb{E}\left(\nu^\frac{[T_xj]+1}{j}\right)^m\mathbbm{1}_{\{l*n_1\leqslant T_x<n+l*n_1\}}
=\sum_{k=l}^{n+l-1}\sum_{d=n_1}^{j-1}\nu^{(k+(d+1)/j)m}\left(_{k+\frac{d}{j}}p_x-_{k+\frac{d+1}{j}}p_x\right)\\
&+
\nu^{(n+l)m}\sum_{d=0}^{n_1-1}\nu^{(d+1)/j\cdot m}\left(_{n+l+\frac{d}{j}}p_x-_{n+l+\frac{d+1}{j}}p_x\right)\\
&=\frac{1}{j}\sum_{k=l}^{n+l-1}\sum_{d=n_1}^{j-1}\nu^{(k+(d+1)/j)m}\frac{_{k+1}p_x\cdot q_{x+k}}{\left(p_{x+k}+\frac{d}{j}\cdot q_{x+k}\right)\left(p_{x+k}+\frac{d+1}{j}\cdot q_{x+k}\right)}\\
&+\frac{\nu^{(n+l)m}}{j}\cdot{_{n+l+1}}p_x\cdot q_{x+n+l} \sum_{d=0}^{n_1-1}
\frac{\nu^{(d+1)/j\cdot m}}{\left(p_{x+n+l}+\frac{d}{j}\cdot q_{x+n+l}\right)
\left(p_{x+n+l}+\frac{d+1}{j}\cdot q_{x+n+l}\right)
}.
\end{align*}
\end{proof}

\begin{proof}[Proof of Proposition \ref{prop:j_[T]_nu_x}]
If $j\in\mathbb{N}$ and $d\in\{0,\,1,\,\ldots,\,j-1\}$ then
\begin{align}\nonumber
&\mathbb{E}\left([j\cdot T_x+1]\cdot \nu^{T_x}\right)^m\mathbbm{1}_{\{ l*n_1\leqslant T_x< n+l*n_1\}}\\
&=\sum_{k=l}^{n+l-1}\sum_{d=n_1}^{j-1}(d+j\cdot k+1)^m\int\limits_{k+d/j}^{k+(d+1)/j}\nu^{m t} f_x(t)\,dt\label{int_1}\\
&+\sum_{d=0}^{n_1-1}(d+j\cdot(n+l)+1)^m\int\limits_{n+l+d/j}^{n+l+(d+1)/j}\nu^{mt}f_x(t)\,dt.
\label{int_2}
\end{align}
The integral from \eqref{int_1} is
\begin{align*}
&\frac{1}{_{k+1}p_x}\int\limits_{k+d/j}^{k+(d+1)/j}\nu^{m t} f_x(t)\,dt
=q_{x+k}\int\limits_{k+\frac{d}{j}}^{k+\frac{d+1}{j}}\frac{\nu^{mt}\,dt}{(1-(k+1-t)q_{x+k})^2}\\
&=\nu^{m\left(k+\frac{d}{j}\right)}\left(\frac{1}{1-(1-d/j)\cdot q_{x+k}}-\frac{\nu^{m/j}}{1-(1-(d+1)/j)\cdot q_{x+k}}\right)\\
&-\delta m\int\limits_{k+d/j}^{k+(d+1)/j}\frac{e^{-\delta m t}\,dt}{1-(k+1-t)q_{x+k}},
\end{align*}
while the integral from \eqref{int_2} is the same if $k=n+l$. The proof follows upon Lemma \ref{lem:spec_int} and other elementary means.
\end{proof}

\section{Acknowledgments}
We thank our colleague Professor Jonas Šiaulys for reviewing the first draft of the manuscript. 

\bibliography{sn-bibliography}

\end{document}